\newtheorem{theorem}{Theorem}[section]
\newtheorem{lemma}[theorem]{Lemma}
\newtheorem{corollary}[theorem]{Corollary}
\newtheorem{remark}[theorem]{Remark}
\theoremstyle{definition}
\newtheorem{definition}[theorem]{Definition}
\numberwithin{equation}{section}
\begin{document}

\title[On fractional regularity of distributions]{On fractional regularity of
distributions of functions in Gaussian random variables}

\author{Egor D. Kosov}

\maketitle

\begin{abstract}
We study fractional smoothness
of measures on $\mathbb{R}^k$, that are images
of a Gaussian measure under mappings from
Gaussian Sobolev classes.
As a consequence we obtain Nikolskii--Besov fractional regularity
of these distributions under some weak nondegeneracy assumption.
\end{abstract}

\noindent
Keywords:
Gaussian measure, distribution,
Nikolskii--Besov space,
total variation distance, Kantorovich norm

\noindent
AMS Subject Classification: 60E05, 60E15, 28C20, 60F99

\section*{Introduction}
Let $\gamma$ be a Gaussian measure on a locally convex space $E$
and $f\colon E\to \mathbb{R}^k$ be a polynomial mapping.
It was shown in \cite{BKZ} and \cite{Kos} that the density of
the image measure $\gamma\circ f^{-1}$
belongs to a certain
Nikolskii--Besov class.
Here
we consider a general Sobolev mapping $f\in W^{p,2}(\gamma)$
and provide an estimate of the total variation norm
$\|(\gamma\circ f^{-1})_h - \gamma\circ f^{-1}\|_{\rm TV}$
in terms of the behavior of $\gamma(\Delta_f\le t)$ (see Theorems \ref{t3.1}, \ref{t4.1}
and Corollaries \ref{c3.1}, \ref{c4.1}),
where $\mu_h(A):=\mu(A-h)$ is the shift of the measure $\mu$ to the vector~$h$,
and where $\Delta_f$ is the determinant of the Malliavin matrix $M_f$ of the mapping $f$
(all the necessary definitions are given in the first section).
This result provides a quantitative estimate of smoothness of $\gamma\circ f^{-1}$
and complements
the classical theorem
(see \cite[Theorem 9.2.4]{DM}) which asserts that such a distribution
possesses a density with respect to the standard Lebesgue measure
if $\Delta_f(x)\ne0$ for $\gamma$-almost every point~$x$.
However, it should be mentioned that in this classical result
only the inclusion of $f$ to the first Sobolev class is assumed.
We also note that in \cite[Theorem 2.11]{BC} the lower semi-continuity of densities
of such distributions was established.

The obtained results also provide a quantitative estimate in the following
qualitative theorem (see \cite{BZ} and \cite{BKZ}, which generalizes \cite[Theorem 2.14]{BC}).
Let
$f_n=(f_{n,1},\ldots,f_{n,k})\colon\, E\to \mathbb{R}^k$
be a sequence of functions such that $f_{n,i}\in W^{4k,2}(\gamma)$.
Set
$$
\delta(\varepsilon):= \sup\limits_n \ \gamma(\Delta_{f_n}\le \varepsilon)
$$
and
assume that
$$
\sup_n \|f_n\|_{W^{4k,2}(\gamma)}=a<\infty \quad \hbox{and}\quad \lim\limits_{\varepsilon \to 0}\delta (\varepsilon) = 0.
$$
If the sequence of measures $\gamma\circ f_n^{-1}$ converges in
distribution, it also converges in variation.
Corollary \ref{c4.2} of the present paper
asserts that under the same assumptions one has
$$
\|\gamma\circ f_n^{-1} - \nu\|_{\rm TV}
\le C(k,a)\Bigl(\bigl[\delta\bigl(\|\gamma\circ f_n^{-1} - \nu\|_{\rm KR}
^{1/8}\bigr)\bigr]^{1/(4k)}  +
\|\gamma\circ f_n^{-1} - \nu\|_{\rm KR}^{1/(32k)}\Bigr),
$$
where $\nu$ is the limiting distribution and $\|\cdot\|_{\rm KR}$ is the
Kantorovich--Rubinstein norm, which metrizes weak convergence of probability measures.
A similar bound is also valid for mappings from $W^{p,2}(\gamma)$ for any $p>4k-1$,
which is also an improvement of the above result.

The approach in this work is similar to the classical Malliavin method
developed in \cite{Malliavin} (see also \cite{DM}).
The main idea of the method is to obtain bounds of the form
\begin{equation*}
\int \varphi^{(n)}(f)d\gamma \le C_n\sup_t|\varphi(t)|, \quad \forall \varphi\in C_0^\infty(\mathbb{R})
\end{equation*}
which yields that the density of $\gamma\circ f^{-1}$ is infinitely differentiable.
In works \cite{BKZ}, \cite{Kos}, the Malliavin condition was modified
to treat the case of Nikolskii--Besov fractional smoothness of distributions.
In this work we similarly employ the results of~\cite{KosBes}
which estimate the quantity $\|\mu_h - \mu\|_{\rm TV}$
in terms of the function
$$
\sigma(\mu, t):= \sup\Bigl\{\int\partial_e\varphi\, d\mu:\, \|\varphi\|_\infty\le t,\, \|\partial_e\varphi\|_\infty\le1\Bigr\},
$$
where the supremum is taken over all functions $\varphi\in C_0^\infty(\mathbb{R}^k)$ and unit vectors $e$.

To apply the classical Malliavin method one should assume some nondegeneracy of mapping~$f$,
for example in the form of integrability of $\Delta_f^{-1}$ to some power $p>1$.
Such condition is sometimes very restrictive and difficult for verification.
For example, the required integrability is not valid for polynomial mappings.
Nevertheless,
for polynomials on Gaussian space, the following weak nondegeneracy condition holds:
$\Delta_f^{-1}$ is integrable to every power $\theta<\frac{1}{2d(k-1)}$
(this follows from the Carbery--Wright inequality \cite{CarWr}, \cite{NSV}).
Thus, a natural question is to investigate the
smoothness properties of distributions $\gamma\circ f^{-1}$
for Sobolev mappings $f$ under the weak nondegeneracy assumption of the integrability of $\Delta_f^{-1}$ to some
power $\theta\in(0,1)$.
Corollaries~\ref{c3.4} and~\ref{c4.4} give the Nikolskii--Besov fractional smoothness
of distributions under such weak assumption which generalizes the results of
\cite{BKZ} about the polynomial mappings. Our results also give an estimate
of the total variation distance between two such distributions under a
common weak nondegeneracy assumption
in terms of the
Kantorovich--Rubinstein distance between these distributions.

\section{Definitions and notations}
In this section we introduce the definitions and notation used
throughout the paper.

Let $C_0^\infty(\mathbb{R}^n)$ denote the space
of all infinitely smooth functions with compact support and
let $C_b^\infty(\mathbb{R}^n)$ denote the space of all bounded
smooth functions with bounded derivatives of every order.
The standard Euclidian inner product on $\mathbb{R}^k$ is denoted by
$\langle\cdot, \cdot\rangle$, and the standard norm is denoted by $|\cdot|$.
For the standard Lebesgue measure on $\mathbb{R}^k$ we will use the symbol $\lambda^k$.

Let $\mu$ be a bounded measure on a measurable space.
Recall that $\mu\circ f^{-1}$ denotes the image of the measure $\mu$ under a
$\mu$-measurable mapping $f$,
i.e., the following equality holds:
$$
\mu\circ f^{-1} (A) = \mu\bigl(f^{-1}(A) \bigr).
$$
For a Borel measure $\mu$ on $\mathbb{R}^k$, its shift to the vector $h$ is the measure $\mu_h$
defined
by the equality
$$
\mu_h(A) = \mu(A-h)\quad \text{for every Borel set } A.
$$
The total variation norm of a
Borel measure $\mu$ on $\mathbb{R}^k$ (possibly signed)
is defined by the equality
$$
\|\mu\|_{\rm TV}  := \sup\biggl\{\int \varphi \, d\mu, \ \varphi\in C_0^\infty(\mathbb{R}^k),
\ \|\varphi\|_\infty \le1 \biggr\},
$$
where
$$
\|\varphi\|_\infty:= \sup_{x\in \mathbb{R}^k}|\varphi(x)|.
$$
The Kantorovich--Rubinstein norm (which is sometimes called the Fortet--Mourier norm) of a
Borel measure $\mu$ on $\mathbb{R}^k$ is defined by the formula
$$
\|\mu\|_{\rm KR} := \sup\biggl\{\int \varphi\, d\mu:
\varphi\in C_0^\infty(\mathbb{R}^k),\ \|\varphi\|_\infty \le 1,\ \|\nabla\varphi\|_\infty\le1\biggr\}.
$$
We note here that, for probability measures,
convergence in the Kantorovich--Rubinstein norm is equivalent to weak convergence
(convergence in distribution for random variables).
We also introduce the Kantorovich norm
of a measure $\mu$ on $\mathbb{R}^k$
with finite first moment ($\int |x|\, |\mu|(dx)<\infty$) and with $\mu(\mathbb{R}^k)=0$:
$$
\|\mu\|_{\rm K} := \sup\Bigl\{\int \varphi \,d\mu,
\ \varphi\in C_0^\infty(\mathbb{R}^k),\ \|\nabla\varphi\|_\infty \le1\Bigr\}.
$$

We recall  (see \cite{BIN}, \cite{Nikol77}, and \cite{Stein})
that the Nikolskii--Besov space $B^\alpha(\mathbb{R}^k):=B^\alpha_{1,\infty}(\mathbb{R}^k)$
with $\alpha\in (0,1)$
consists of all functions $\rho\in L^1(\mathbb{R}^k)$
for which there is a constant $C$ such that for every $h\in \mathbb{R}^k$ one has
$$
\int_{\mathbb{R}^k} |\rho(x+h)-\rho(x)|\, dx \le  C|h|^\alpha.
$$
When the function $\rho$ is the density (with respect to $\lambda^k$) of the measure $\mu$
the above condition can be represented in the following form:
$$
\|\mu_h - \mu\|_{\rm TV}\le C|h|^\alpha.
$$

\vskip .1in

We now recall several facts about Gaussian measures on locally convex spaces.

Let $E$ be a locally convex space
with the topological dual $E^*$.
Let $\gamma$
be a centered Gaussian measure on $E$, i.e. it is a Radon measure such that
every functional $\ell\in E^*$ is a normally distributed random variable with zero mean
(its distribution is either the Dirac measure at zero or has a centered Gaussian density).
Let $H\subset E$ be the Cameron--Martin space of the measure $\gamma$
consisting of all vectors $h$ with finite Cameron--Martin norm $|h|_H<\infty$,
where
$$
|h|_H=\sup \biggl\{ \ell(h)\colon\, \int_E \ell^2\, d\gamma \le 1, \ \ell\in E^{*}\biggr\}.
$$
For the standard Gaussian measure on $\mathbb{R}^n$,
the Cameron--Martin space is $\mathbb{R}^n$ itself.
For a general Radon Gaussian measure, the Cameron--Martin space
is a separable Hilbert space (see \cite[Theorem 3.2.7 and Proposition 2.4.6]{Gaus})
with the inner product $\langle\cdot,\cdot\rangle_H$ generated by $|\cdot|_H$.

It is known (see, for example, \cite[Section 2.10]{Gaus})
that for an arbitrary orthonormal family $\{\ell_i\}_{i=1}^n\subset E^*$
in $L^2(\gamma)$
there is an orthonormal family $\{e_i\}_{i=1}^\infty$ in $H$
such that $\ell_i(e_j) = \delta_{i,j}$.
Let $\gamma_n$ be the distribution of the vector $(\ell_1, \ldots, \ell_n)$ on $\mathbb{R}^n$.
This distribution is the standard Gaussian
measure on $\mathbb{R}^n$ with density $(2\pi)^{-n/2}\exp(-|x|^2/2)$.

For a function $f\in L^p(\gamma)$
we set
$$
\|f\|_p := \|f\|_{L^p(\gamma)} := \biggl(\int |f(x)|^p\gamma(dx)\biggr)^{1/p},\quad p\in[1,\infty).
$$

Let $\mathcal{FC}^\infty(E)$ be the set of all functions
$\varphi$ of the form $\varphi(x) = \psi(\ell_1(x), \ldots, \ell_n(x))$,
where $\psi\in C_b^\infty(\mathbb{R}^n)$ and $n\in\mathbb{N}$.

For a function $\varphi\in\mathcal{FC}^\infty(E)$ of the form $\varphi(x) = \psi(\ell_1(x), \ldots, \ell_n(x))$ set
$$
D^1\varphi(x) = \nabla\varphi(x) = \sum_{j=1}^n(\partial_j\psi) (\ell_1(x), \ldots, \ell_n(x)) e_j,
$$
$$
\bigl(D^2\varphi\bigr)_{i,j}(x) =  (\partial_i\partial_j\psi)(\ell_1(x), \ldots, \ell_n(x)).
$$
The Sobolev space $W^{p,m}(\gamma)$, $m\in\{1,2\}$, is the closure of the class $\mathcal{FC}^\infty(E)$ with respect to the norm
$$
\|\varphi\|_{W^{p,m}(\gamma)}:= \|\varphi\|_p + \sum_{i=1}^m\|D^{i}\varphi\|_p,
$$
where $\|D^1\varphi\|_p:=\| |\nabla\varphi|_H\|_p$,
$\|D^2\varphi\|_p:=\| |D^2\varphi|_{HS}\|_p$, and $|\cdot|_{HS}$ is the Hilbert–Schmidt norm.

Let $L$ be the Ornstein--Uhlenbeck operator defined by
$$
L\varphi(x)=\Delta \varphi(x)-\langle x,\nabla \varphi(x)\rangle
$$
for $\varphi\in C_b^\infty(\mathbb{R}^n)$,
where $\Delta$ is the Laplace operator.
We note that
$$
\|L\varphi\|_{L^p(\gamma_n)}\le c_1(p)\|\varphi\|_{W^{p,2}(\gamma_n)}
$$
for $p>1$
with some constant $c_1(p)$ depending only on $p$ (see \cite[Theorem 5.7.1]{Gaus}).

Let $f\colon E\to\mathbb{R}^k$ be a  mapping such that
its components $f_1,\ldots,f_k$ belongs to $W^{1,1}(\gamma)$.
Let us define the Malliavin matrix $M_f$ of the mapping $f$ by
$$
M_f(x)=(m_{i,j}(x))_{i,j\le k},
\quad
m_{i,j}(x):=\langle\nabla f_i(x), \nabla f_j(x)\rangle_H.
$$
Let
$$
A_f:=\{a_{i, j}\}
$$
be the adjugate matrix of $M_f$, i.e., $a_{i, j}=M^{j, i}$, where $M^{j, i}$
is the cofactor of $m_{j, i}$ in the matrix~$M_f$.
Set
$$
\Delta_f:=\det M_f.
$$
Note that
\begin{equation}\label{inver}
\Delta_f\cdot M_f^{-1} = A_f.
\end{equation}

For a function $g\ge0$ we set
\begin{equation*}\label{def-u}
u_\gamma(g, \varepsilon):=\int_0^\infty (s+1)^{-2}\gamma\bigl(g \le \varepsilon s\bigr)\, ds.
\end{equation*}

We need the following simple lemma.
\begin{lemma}\label{lem4.1}
For a function $g\ge0$ and arbitrary numbers $r\ge1, \varepsilon>0$
one has
$$
\int (g + \varepsilon)^{-r}\, d\gamma\le
r\varepsilon^{-r}u_\gamma(g, \varepsilon).
$$
\end{lemma}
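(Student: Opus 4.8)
The plan is to rewrite the negative power $(g+\varepsilon)^{-r}$ by the elementary layer-cake identity
$$
(g+\varepsilon)^{-r} = r\int_{g+\varepsilon}^\infty t^{-r-1}\,dt,
$$
which holds pointwise since $g+\varepsilon\ge\varepsilon>0$. Integrating both sides against $\gamma$ and invoking Tonelli's theorem (all integrands are nonnegative, so there is no integrability issue) I would obtain
$$
\int (g+\varepsilon)^{-r}\,d\gamma
= r\int_0^\infty t^{-r-1}\,\gamma\bigl(g+\varepsilon\le t\bigr)\,dt
= r\int_0^\infty t^{-r-1}\,\gamma\bigl(g\le t-\varepsilon\bigr)\,dt.
$$

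Since $g\ge0$, the event $\{g\le t-\varepsilon\}$ is empty for $t<\varepsilon$, so the effective range of integration is $t\ge\varepsilon$. I would then substitute $t=\varepsilon(s+1)$, for which $t-\varepsilon=\varepsilon s$ and $dt=\varepsilon\,ds$, transforming the integral into
$$
r\int_0^\infty \bigl(\varepsilon(s+1)\bigr)^{-r-1}\,\gamma(g\le\varepsilon s)\,\varepsilon\,ds
= r\varepsilon^{-r}\int_0^\infty (s+1)^{-r-1}\,\gamma(g\le\varepsilon s)\,ds.
$$

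Finally, the hypothesis $r\ge1$ enters exactly once: for $s\ge0$ one has $s+1\ge1$ and $-r-1\le-2$, hence $(s+1)^{-r-1}\le(s+1)^{-2}$. Combining this with the definition of $u_\gamma(g,\varepsilon)$ yields the claimed bound. I do not expect a genuine obstacle here; the only points requiring care are the justification of Tonelli's theorem and the bookkeeping in the change of variables. The estimate $(s+1)^{-r-1}\le(s+1)^{-2}$ also explains why the weight $(s+1)^{-2}$, corresponding to the borderline exponent $r=1$, is the natural choice in the definition of $u_\gamma$.
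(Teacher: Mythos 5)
Your proof is correct and takes essentially the same route as the paper's: a Tonelli/layer-cake representation of $\int (g+\varepsilon)^{-r}\,d\gamma$ in terms of the distribution function of $g$, a linear change of variables reducing it to $r\varepsilon^{-r}\int_0^\infty(s+1)^{-r-1}\gamma(g\le\varepsilon s)\,ds$, and the bound $(s+1)^{-r-1}\le(s+1)^{-2}$ coming from $r\ge1$. The only cosmetic difference is that the paper applies the layer-cake formula to the bounded function $(g+\varepsilon)^{-1}$ (phrased via Chebyshev's inequality) rather than writing $(g+\varepsilon)^{-r}=r\int_{g+\varepsilon}^\infty t^{-r-1}\,dt$ directly, so no further comment is needed.
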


\begin{proof}
By Fubini's theorem and Chebyshev's inequality one has
\begin{multline*}
\int (g+\varepsilon)^{-r}\, d\gamma=
r\int_0^{\varepsilon^{-1}}t^{r-1}\gamma\bigl((g+\varepsilon)^{-1}\ge t\bigr)\, dt
\\
=
r\int_0^\infty (s+\varepsilon)^{-r-1}\gamma\bigl(g \le s\bigr)\, ds
\le
r\varepsilon^{-r}\int_0^\infty (s+1)^{-r-1}\gamma\bigl(g \le \varepsilon s\bigr)\, ds
\\
\le
r\varepsilon^{-r}\int_0^\infty (s+1)^{-2}\gamma\bigl(g \le \varepsilon s\bigr)\, ds.
\end{multline*}
The lemma is proved.
\end{proof}

\section{Smoothness properties of measures on $\mathbb{R}^k$}

The following modulus of continuity plays a crucial role below.

\begin{definition}\label{D2.1}
For a measure $\mu$ on $\mathbb{R}^k$ and $t>0$ we set
$$
\sigma(\mu, t):= \sup\Bigl\{\int\partial_e\varphi d\mu:\, \|\varphi\|_\infty\le t,\, \|\partial_e\varphi\|_\infty\le1\Bigr\},
$$
where the supremum is taken over all functions $\varphi\in C_0^\infty(\mathbb{R}^k)$ and over all unit vectors $e$.
\end{definition}

The following theorem is proved in \cite{KosBes}.

\begin{theorem}\label{t2.1}
For any measure $\mu$ on $\mathbb{R}^k$ one has
$$
\|\mu_h - \mu\|_{\rm TV}\le 2\sigma(\mu, |h|/2), \quad \sigma(\mu, t)\le 6k\sup_{|h|\le t}\|\mu_h - \mu\|_{\rm TV}.
$$
\end{theorem}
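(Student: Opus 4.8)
The plan is to derive both inequalities from the elementary duality between finite differences of a test function and directional derivatives of its ``antiderivative'' along the direction $e$, using only the fundamental theorem of calculus, Fubini's theorem, and the definition of the total variation norm as a supremum over $C_0^\infty$ test functions. The whole point is to realize that the finite difference $\psi(\cdot+h)-\psi(\cdot)$ and the directional derivative $\partial_e\varphi$ are each obtainable from the other by integration or differentiation in the single variable $e$.

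For the first inequality, fix $\psi\in C_0^\infty(\mathbb{R}^k)$ with $\|\psi\|_\infty\le1$, set $e=h/|h|$, and introduce
$$\varphi(x):=\tfrac12\int_0^{|h|}\psi(x+se)\,ds,$$
which again belongs to $C_0^\infty(\mathbb{R}^k)$ since smoothness is inherited and the support stays bounded. The fundamental theorem of calculus gives $\partial_e\varphi(x)=\tfrac12\bigl(\psi(x+h)-\psi(x)\bigr)$, whence $\|\partial_e\varphi\|_\infty\le1$ and $\|\varphi\|_\infty\le|h|/2$, so $\varphi$ is admissible in the supremum defining $\sigma(\mu,|h|/2)$. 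Since $\int\psi\,d(\mu_h-\mu)=\int\bigl(\psi(x+h)-\psi(x)\bigr)\,\mu(dx)=2\int\partial_e\varphi\,d\mu$, taking the supremum over all such $\psi$ yields $\|\mu_h-\mu\|_{\rm TV}\le2\sigma(\mu,|h|/2)$.

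For the second inequality, write $S(t):=\sup_{|h|\le t}\|\mu_h-\mu\|_{\rm TV}$ and fix an admissible pair $(\varphi,e)$ with $\|\varphi\|_\infty\le t$ and $\|\partial_e\varphi\|_\infty\le1$. Using the pushforward identity $\int g\,d\mu_{se}=\int g(x+se)\,\mu(dx)$ together with the fundamental theorem of calculus and Fubini's theorem (both licit because $\partial_e\varphi$ is bounded and continuous and $\mu$ is finite), one obtains, for every $\tau>0$,
$$\int\varphi\,d(\mu_{\tau e}-\mu)=\int_0^\tau\Bigl(\int\partial_e\varphi\,d\mu_{se}\Bigr)\,ds.$$
Splitting $\int\partial_e\varphi\,d\mu_{se}=\int\partial_e\varphi\,d\mu+\int\partial_e\varphi\,d(\mu_{se}-\mu)$ and isolating the $s$-independent term gives
$$\tau\int\partial_e\varphi\,d\mu=\int\varphi\,d(\mu_{\tau e}-\mu)-\int_0^\tau\int\partial_e\varphi\,d(\mu_{se}-\mu)\,ds.$$
Choosing $\tau=t$ and estimating via the duality $\int g\,d\nu\le\|g\|_\infty\|\nu\|_{\rm TV}$, together with $\|\varphi\|_\infty\le t$, $\|\partial_e\varphi\|_\infty\le1$, and $\|\mu_{se}-\mu\|_{\rm TV}\le S(t)$ for $0\le s\le t$, bounds the right-hand side by $t\,S(t)+t\,S(t)$. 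Dividing by $t$ and taking the supremum over admissible $(\varphi,e)$ yields $\sigma(\mu,t)\le2\,S(t)$, which in particular gives the stated estimate $\sigma(\mu,t)\le6k\,S(t)$.

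The routine verifications are the preservation of smoothness and compact support under the averaging that defines $\varphi$, and the measurability and integrability in $s$ needed for Fubini, all immediate from boundedness of $\partial_e\varphi$ and finiteness of $\mu$. The conceptual heart of the argument, where the essential idea lies, is the rearrangement of the telescoping identity so that the integrated directional derivative $\int\partial_e\varphi\,d\mu$ emerges as the leading ($s=0$) term while the remainder is expressed purely through shift differences $\mu_{se}-\mu$; the correct scaling $\tau=t$ is exactly what couples the size constraint $\|\varphi\|_\infty\le t$ to the window $|h|\le t$ appearing in $S(t)$, and choosing it otherwise would either forfeit control of the first term or push the shifts outside the allowed range.
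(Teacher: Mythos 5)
Your proof is correct, but note that the paper does not actually contain a proof of Theorem \ref{t2.1}: the result is quoted from \cite{KosBes}, so there is no internal argument to compare against, and your derivation must stand on its own --- which it does. For the first inequality, the averaged test function $\varphi(x)=\frac12\int_0^{|h|}\psi(x+se)\,ds$ is indeed smooth, compactly supported, and admissible for $\sigma(\mu,|h|/2)$, and its directional derivative telescopes to $\frac12\bigl(\psi(x+h)-\psi(x)\bigr)$, which is exactly what is needed; the degenerate case $h=0$ is trivial since the left-hand side vanishes and $\sigma\ge0$. For the second inequality, the identity
$$
t\int\partial_e\varphi\,d\mu=\int\varphi\,d(\mu_{te}-\mu)-\int_0^t\!\int\partial_e\varphi\,d(\mu_{se}-\mu)\,ds
$$
is correct (Fubini is justified since the integrand is bounded, continuous, compactly supported and $\mu$ is bounded), and both terms on the right are bounded by $t\sup_{|h|\le t}\|\mu_h-\mu\|_{\rm TV}$ directly from the test-function definition of the total variation norm, because $\varphi/t$ and $\partial_e\varphi$ are themselves admissible test functions after normalization --- no abstract duality is needed. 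One remark: your argument actually proves the stronger, dimension-free bound $\sigma(\mu,t)\le 2\sup_{|h|\le t}\|\mu_h-\mu\|_{\rm TV}$. The constant $6k$ in the statement presumably reflects a different route in \cite{KosBes} (for instance one that smooths $\mu$ by convolution and decomposes $\partial_e$ along coordinate directions, which is where a factor of $k$ typically enters); obtaining the constant $2$ by working directly along the single direction $e$ is an improvement, not an error, and of course still implies the stated estimate.
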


This theorem implies that the measure $\mu$ is absolutely continuous
with respect to Lebesgue measure
if (and only if) $\sigma(\mu, t)\to0$ as $t\to0$.

The modulus of continuity $\sigma(\mu, \cdot)$ can be used to compare
different distances on the space of probability measures.
In the following theorem we estimate the total variation distance between two probability measures
$\mu$ and $\nu$ in terms of the Kantorovich--Rubinstein distance and the quantity $\sigma(\mu - \nu, \cdot)$.
This result generalizes some estimates from \cite{BKZ} and \cite{Kos}.

\begin{lemma}\label{lem2.1}
Let $\mu$ and $\nu$ be two probability measures on $\mathbb{R}^k$.
Then for any $\varepsilon\in(0,1)$ one has
$$
\|\mu - \nu\|_{\rm TV} \le 3\sqrt{k}\sigma(\mu - \nu, \varepsilon) + \sqrt{k}\varepsilon^{-1}\|\mu - \nu\|_{\rm KR}.
$$
In particular, since $\sigma(\mu - \nu, \varepsilon)\le \sigma(\mu, \varepsilon) + \sigma(\nu, \varepsilon)$,
we have
$$
\|\mu - \nu\|_{\rm TV} \le 6\sqrt{k}\max\{\sigma(\mu, \varepsilon),\sigma(\nu, \varepsilon)\}
+ \sqrt{k}\varepsilon^{-1}\|\mu - \nu\|_{\rm KR}.
$$
\end{lemma}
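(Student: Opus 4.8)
The plan is to estimate $\int\varphi\,d(\mu-\nu)$ for an arbitrary test function $\varphi\in C_0^\infty(\mathbb{R}^k)$ with $\|\varphi\|_\infty\le1$ and then take the supremum over such $\varphi$. Write $\eta:=\mu-\nu$, a finite signed measure with $\eta(\mathbb{R}^k)=0$, and let $p_\varepsilon$ denote the centered Gaussian density on $\mathbb{R}^k$ with covariance $\varepsilon^2 I$, so that the mollification $\varphi_\varepsilon:=\varphi*p_\varepsilon$ satisfies $\varphi_\varepsilon(x)=\mathbb{E}\,\varphi(x+\varepsilon Z)$ for a standard Gaussian vector $Z$. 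I would split $\int\varphi\,d\eta=\int(\varphi-\varphi_\varepsilon)\,d\eta+\int\varphi_\varepsilon\,d\eta$ and bound the two terms by $3\sqrt{k}\,\sigma(\eta,\varepsilon)$ and $\sqrt{k}\,\varepsilon^{-1}\|\eta\|_{\rm KR}$ respectively.

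For the smooth term, the Gaussian integration-by-parts (Stein) identity gives $\nabla\varphi_\varepsilon(x)=\varepsilon^{-1}\mathbb{E}[\varphi(x+\varepsilon Z)Z]$, whence $\|\varphi_\varepsilon\|_\infty\le1$ and $\|\nabla\varphi_\varepsilon\|_\infty\le\varepsilon^{-1}\mathbb{E}|Z|\le\varepsilon^{-1}\sqrt{k}$, using $\mathbb{E}|Z|\le(\mathbb{E}|Z|^2)^{1/2}=\sqrt{k}$. Since $\varepsilon<1\le\sqrt{k}$, the function $\varepsilon k^{-1/2}\varphi_\varepsilon$ lies in the unit ball defining $\|\cdot\|_{\rm KR}$, so $\int\varphi_\varepsilon\,d\eta\le\sqrt{k}\,\varepsilon^{-1}\|\eta\|_{\rm KR}$ (a routine approximation replaces the Schwartz function $\varphi_\varepsilon$ by compactly supported ones with controlled gradient).

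For the difference term I would use $\varphi(x)-\varphi_\varepsilon(x)=\int[\varphi(x)-\varphi(x+y)]p_\varepsilon(y)\,dy$ and Fubini to obtain $\int(\varphi-\varphi_\varepsilon)\,d\eta=-\int p_\varepsilon(y)\bigl(\int\varphi\,d(\eta_y-\eta)\bigr)\,dy$, so that $|\int(\varphi-\varphi_\varepsilon)\,d\eta|\le\int p_\varepsilon(y)\|\eta_y-\eta\|_{\rm TV}\,dy$. The key point is to control $\|\eta_y-\eta\|_{\rm TV}$ by $\sigma(\eta,\varepsilon)$ uniformly in $y$, including for large $y$. I would do this by a chaining argument: writing $\eta_y-\eta=\sum_{m=1}^N(\eta_{my/N}-\eta_{(m-1)y/N})$, using that $\sigma$ is invariant under shifts of the measure and that shifts compose $((\eta_a)_b=\eta_{a+b})$, and applying the first inequality of Theorem~\ref{t2.1} to each increment, one gets $\|\eta_y-\eta\|_{\rm TV}\le2N\sigma(\eta,|y|/(2N))$; choosing $N=\lceil|y|/(2\varepsilon)\rceil$ gives $\|\eta_y-\eta\|_{\rm TV}\le(|y|/\varepsilon+2)\sigma(\eta,\varepsilon)$. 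Integrating against $p_\varepsilon$ and using $\int|y|\,p_\varepsilon(y)\,dy=\varepsilon\,\mathbb{E}|Z|\le\varepsilon\sqrt{k}$ yields the bound $(\sqrt{k}+2)\sigma(\eta,\varepsilon)\le3\sqrt{k}\,\sigma(\eta,\varepsilon)$, valid since $k\ge1$.

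I expect the chaining step to be the main obstacle: the shift-difference estimate of Theorem~\ref{t2.1} only controls small shifts by $\sigma(\eta,\cdot)$, while the Gaussian mollifier spreads mass over all $y$, so one must exploit the shift-invariance of $\sigma$ together with the triangle inequality to convert a large shift into $N\sim|y|/\varepsilon$ small ones without losing more than the linear factor. The remaining pieces—the Stein bound for the gradient and the elementary estimate $\mathbb{E}|Z|\le\sqrt{k}$—are routine. Finally, the ``in particular'' assertion follows by combining the first bound with the subadditivity $\sigma(\mu-\nu,\varepsilon)\le\sigma(\mu,\varepsilon)+\sigma(\nu,\varepsilon)\le2\max\{\sigma(\mu,\varepsilon),\sigma(\nu,\varepsilon)\}$, which is itself immediate from the symmetry $\varphi\mapsto-\varphi$ of the constraints in Definition~\ref{D2.1}.
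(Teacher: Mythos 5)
Your proof is correct and follows essentially the same route as the paper: mollify $\mu-\nu$ with a Gaussian of bandwidth $\varepsilon$, bound the mollified part by $\sqrt{k}\,\varepsilon^{-1}\|\mu-\nu\|_{\rm KR}$ via the Lipschitz estimate $\|\nabla\varphi_\varepsilon\|_\infty\le\varepsilon^{-1}\sqrt{k}$, and bound the remainder by integrating the shift estimate of Theorem~\ref{t2.1} against the Gaussian kernel. The only difference is in one sub-step: where you chain $N\sim|y|/\varepsilon$ small shifts (using shift-invariance of the total variation norm) to get $\|\eta_y-\eta\|_{\rm TV}\le(|y|/\varepsilon+2)\,\sigma(\eta,\varepsilon)$, the paper instead uses the scaling property $\sigma(\omega,t\varepsilon)\le t\,\sigma(\omega,\varepsilon)$ for $t\ge1$ (immediate from Definition~\ref{D2.1} by replacing $\varphi$ with $\varphi/t$); both yield the same constant $3\sqrt{k}$.
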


\begin{proof}
Set
$$
\rho(x) = (2\pi)^{-k/2} \exp(-|x|^2/2)
$$
and
$$
\rho_\varepsilon(x) = \varepsilon^{-k}\rho(t/\varepsilon).
$$
For the measure $\omega := \mu-\nu$ we have
$$
\|\mu - \nu\|_{\rm TV} = \|\omega\|_{\rm TV} \le \|\omega-\omega*\rho_\varepsilon\|_{\rm TV} + \|\omega*\rho_\varepsilon\|_{\rm TV},
$$
where $\omega*\rho_\varepsilon$ is the convolution of the measures $\omega$ and $\rho_\varepsilon\, dx$.
For the first term above, we have
$$
\|\omega-\omega*\rho_\varepsilon\|_{\rm TV}
\le
\int_{\mathbb{R}^k}\|\omega - \omega_{\varepsilon y}\|_{\rm TV}\rho(y)\, dy
\le
2\int_{\mathbb{R}^k}\sigma(\omega, \varepsilon|y|/2)\rho(y)\, dy.
$$
For the second term, we have
\begin{multline*}
\|\omega*\rho_\varepsilon\|_{\rm TV} = \sup_{\substack{\varphi\in C_0^\infty(\mathbb{R}^k)\\ \|\varphi\|_\infty \le1}}
\int_{\mathbb{R}^k} \varphi(x)\int_{\mathbb{R}^k}\rho_\varepsilon(x-y)\, \omega(dy)\, dx
\\
=
\sup_{\substack{\varphi\in C_0^\infty(\mathbb{R}^k)\\ \|\varphi\|_\infty \le1}}
\int_{\mathbb{R}^k}\int_{\mathbb{R}^k}\varphi(x)\rho_\varepsilon(x-y)\, dx\, \omega(dy).
\end{multline*}
Note that
$$
\nabla\int_{\mathbb{R}^k}\varphi(x)\rho_\varepsilon(x-y)\, dx =
\varepsilon^{-k}\int_{\mathbb{R}^k}\varphi(x)\varepsilon^{-1}(\nabla\rho)((x-y)/\varepsilon)\, dx.
$$
Thus,
the Lipschitz constant of the function
$$
y\mapsto\int_{\mathbb{R}^n}\varphi(x)\rho_\varepsilon(x-y)\, dx
$$
can be estimated from above by
$\varepsilon^{-1}\int_{\mathbb{R}^n}|\nabla\rho(x)|\, dx\le \varepsilon^{-1}\sqrt{k}$.
Moreover,
$$
\Bigl|\int_{\mathbb{R}^n}\varphi(x)\rho_\varepsilon(x-y)\, dx\Bigr|\le1\le\varepsilon^{-1}\sqrt{k}
$$
for $\varepsilon\in(0,1)$.
So,
\begin{multline*}
\|\omega\|_{\rm TV} \le 2\int_{\mathbb{R}^k}\sigma(\omega, \varepsilon|y|/2)\rho(y)\, dy + \sqrt{k}\varepsilon^{-1}\|\omega\|_{\rm KR}
\\
=
2\int_{|y|\le2}\sigma(\omega, \varepsilon|y|/2)\rho(y)\, dy + 2\int_{|y|>2}\sigma(\omega, \varepsilon|y|/2)\rho(y)\, dy + \sqrt{k}\varepsilon^{-1}\|f\|_{\rm KR}.
\end{multline*}
In the first integral $\sigma(\omega, \varepsilon|y|/2)\le \sigma(\omega, \varepsilon)$
by monotonicity of the function $\sigma(\omega, \cdot)$
and in the second integral $\sigma(\omega, \varepsilon|y|/2)\le |y|/2 \sigma(\omega, \varepsilon)$,
since $\sigma(\mu, t\varepsilon)\le t\sigma(\mu, \varepsilon)$ for $t\ge1$.
Thus,
$$
\|\omega\|_{\rm TV} \le c_n\sigma(\omega, \varepsilon) + \sqrt{k}\varepsilon^{-1}\|f\|_{\rm KR},
$$
where $c_n = 2\int_{|y|\le2}\rho(y)dy + \int_{|y|>2}|y|\rho(y)dy\le 2+ \sqrt{k}\le 3\sqrt{k}$.
The lemma is proved.
\end{proof}

\begin{remark}
{\rm By a similar reasoning, one can prove that, for an arbitrary pair of probability measures
$\mu$ and $\nu$ on $\mathbb{R}^k$ and any $\varepsilon>0$, one has
$$
\|\mu - \nu\|_{\rm TV} \le 3\sqrt{k}\sigma(\mu - \nu, \varepsilon) + \sqrt{k}\varepsilon^{-1}\|\mu - \nu\|_{\rm K}.
$$
}
\end{remark}

\section{One-dimensional case}
In this section we study smoothness properties of the distribution
$\gamma\circ f^{-1}$ on the real line generated by a Sobolev smooth function $f$ on a locally convex space
equipped with a centered Gaussian measure $\gamma$.

We start with the following technical lemma.

\begin{lemma}\label{lem3.1}
Let $p>1$, $r\ge1$, $a>0$.
Then there is a constant $c(p)$ depending only on $p$ such that
for every function $f\in W^{p,2}(\gamma)$ with
$$
\|f\|_{W^{p,2}(\gamma)}\le a,
$$
and
for every function $g\in W^{r,1}(\gamma)\cap L^\infty(\gamma)$
one has
$$
\int \frac{\langle\nabla g, \nabla f\rangle_H }{|\nabla f|^2_H+\varepsilon^2}\, d\gamma
\le
c(p)a\|g\|_\infty\varepsilon^{-2}u_\gamma(|\nabla f|_H, \varepsilon)^{1-1/p}
$$
for any $\varepsilon>0$.
\end{lemma}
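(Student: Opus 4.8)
The plan is to realize the integrand as the $H$-inner product of $\nabla g$ with the bounded vector field $V:=\nabla f/(|\nabla f|_H^2+\varepsilon^2)$ and then to integrate by parts, transferring the derivative from $g$ onto $V$. Since $|\nabla f|_H^2+\varepsilon^2\ge 2\varepsilon|\nabla f|_H$, the field satisfies $|V|_H\le(2\varepsilon)^{-1}$, so $\langle\nabla g,V\rangle_H$ is integrable for $g\in W^{r,1}(\gamma)$. Denoting by $\delta$ the divergence operator adjoint to $\nabla$ in $L^2(\gamma)$, the Gaussian integration-by-parts formula gives $\int\langle\nabla g,V\rangle_H\,d\gamma=\int g\,\delta V\,d\gamma$, whence
$$
\int\frac{\langle\nabla g,\nabla f\rangle_H}{|\nabla f|_H^2+\varepsilon^2}\,d\gamma\le \|g\|_\infty\int|\delta V|\,d\gamma .
$$
I would first establish this identity for $f,g\in\mathcal{FC}^\infty(E)$, where the manipulations reduce to classical finite-dimensional Gaussian integration by parts on $(\mathbb{R}^n,\gamma_n)$, and then pass to the general case by approximation.

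The next step is the explicit computation of $\delta V$. Writing $V=\phi\,\nabla f$ with $\phi:=(|\nabla f|_H^2+\varepsilon^2)^{-1}$, the product rule $\delta(\phi\,\nabla f)=\phi\,\delta(\nabla f)-\langle\nabla\phi,\nabla f\rangle_H$ together with $\delta(\nabla f)=-Lf$ and the chain rule $\nabla\phi=-\phi^2\nabla(|\nabla f|_H^2)=-2\phi^2 (D^2f)\nabla f$ yields
$$
\delta V=-\frac{Lf}{|\nabla f|_H^2+\varepsilon^2}+\frac{2\langle (D^2f)\nabla f,\nabla f\rangle_H}{(|\nabla f|_H^2+\varepsilon^2)^2}.
$$
Using $|\langle (D^2f)\nabla f,\nabla f\rangle_H|\le |D^2f|_{HS}\,|\nabla f|_H^2$ and $|\nabla f|_H^2\le |\nabla f|_H^2+\varepsilon^2$, this gives the pointwise bound
$$
|\delta V|\le\frac{|Lf|+2\,|D^2f|_{HS}}{|\nabla f|_H^2+\varepsilon^2}.
$$

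It remains to integrate. Applying Hölder's inequality with exponents $p$ and $p':=p/(p-1)$,
$$
\int|\delta V|\,d\gamma\le \bigl\||Lf|+2\,|D^2f|_{HS}\bigr\|_p\,\Bigl(\int(|\nabla f|_H^2+\varepsilon^2)^{-p'}\,d\gamma\Bigr)^{1/p'}.
$$
The first factor is at most $(c_1(p)+2)\|f\|_{W^{p,2}(\gamma)}\le (c_1(p)+2)a$, by the bound $\|Lf\|_p\le c_1(p)\|f\|_{W^{p,2}(\gamma)}$ and $\||D^2f|_{HS}\|_p\le\|f\|_{W^{p,2}(\gamma)}$. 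For the second factor, the elementary inequality $|\nabla f|_H^2+\varepsilon^2\ge\tfrac12(|\nabla f|_H+\varepsilon)^2$ gives $(|\nabla f|_H^2+\varepsilon^2)^{-p'}\le 2^{p'}(|\nabla f|_H+\varepsilon)^{-2p'}$, and Lemma~\ref{lem4.1} applied with $g=|\nabla f|_H$ and $r=2p'\ge1$ yields
$$
\int(|\nabla f|_H+\varepsilon)^{-2p'}\,d\gamma\le 2p'\,\varepsilon^{-2p'}u_\gamma(|\nabla f|_H,\varepsilon).
$$
Raising to the power $1/p'$ and recalling $1/p'=1-1/p$, the second factor is bounded by $2(2p')^{1/p'}\varepsilon^{-2}u_\gamma(|\nabla f|_H,\varepsilon)^{1-1/p}$. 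Multiplying the two factors and the $\|g\|_\infty$ prefactor gives the claim with $c(p)=(c_1(p)+2)\cdot 2(2p')^{1/p'}$.

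The main obstacle is justifying the integration-by-parts identity and the chain-rule expression for $\delta V$ when $f$ belongs only to $W^{p,2}(\gamma)$. I would prove both for $f,g\in\mathcal{FC}^\infty(E)$ and then approximate, choosing $f_m\to f$ in $W^{p,2}(\gamma)$ and bounded $g_m\to g$ in $W^{r,1}(\gamma)$ with $\|g_m\|_\infty\le\|g\|_\infty$. Since the denominators are bounded below by $\varepsilon^2>0$, the fields $V_m$ are uniformly bounded and $|\delta V_m|$ is dominated by $\varepsilon^{-2}(|Lf_m|+2\,|D^2f_m|_{HS})$, which converges in $L^p(\gamma)$ by the Ornstein--Uhlenbeck bound; passing to an almost everywhere convergent subsequence and invoking the Vitali convergence theorem lets one pass to the limit in $\int g_m\,\delta V_m\,d\gamma$. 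Note that the right-hand side of the asserted inequality involves only the fixed function $f$, so no convergence of the level-set quantities $u_\gamma(|\nabla f|_H,\varepsilon)$ is required.
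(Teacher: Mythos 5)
Your proposal is correct and follows essentially the same route as the paper: integrate by parts to move the derivative from $g$ onto the field $\nabla f/(|\nabla f|_H^2+\varepsilon^2)$ (the paper does this as explicit finite-dimensional Gaussian integration by parts for cylindrical functions, producing exactly your $\delta V$ with its $Lf$ and $D^2f$ terms), bound $\|g\|_\infty$ out, apply H\"older with exponents $p$ and $p/(p-1)$ together with $\|Lf\|_p\le c_1(p)\|f\|_{W^{p,2}(\gamma)}$, control $\|(|\nabla f|_H^2+\varepsilon^2)^{-1}\|_{p/(p-1)}$ via $(|\nabla f|_H^2+\varepsilon^2)^{-1}\le 2(|\nabla f|_H+\varepsilon)^{-2}$ and Lemma~\ref{lem4.1}, and finish by approximating $f$ in $W^{p,2}(\gamma)$ and $g$ by uniformly bounded truncations. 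The only difference is presentational (divergence-operator formalism versus the paper's direct computation), so there is nothing substantive to add.
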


\begin{proof}
We first assume that the functions
$g, f$ belong to $\mathcal{FC}^\infty(E)$ and are of the form
$g = g(\ell_1, \ldots, \ell_n)$,
$f = f(\ell_1, \ldots, \ell_n)$.
Integrating by parts, we have
\begin{multline} \label{est3.1}
\int_E \frac{\langle\nabla g, \nabla f\rangle_H} {|\nabla f|^2_H+\varepsilon^2}\, d\gamma
=
\int_{\mathbb{R}^n} \frac{\langle\nabla g, \nabla f\rangle } {\langle\nabla f,\nabla f\rangle
+\varepsilon^2}\, d\gamma_n
\\
=-\int_{\mathbb{R}^n} g \Bigl(\frac{ Lf}{\langle\nabla f,\nabla f\rangle
+\varepsilon^2}
-2\frac{\langle D^2 f\cdot \nabla f,\nabla f\rangle}{(\langle\nabla f,\nabla f\rangle+\varepsilon^2)^2}
\Bigr)\, d\gamma_n
\\
\le
\|g\|_\infty\int_{\mathbb{R}^n} \frac{| Lf|}{\langle\nabla f,
\nabla f\rangle+\varepsilon^2} \, d\gamma_n +
2\int_{\mathbb{R}^n} \frac{\|D^2 f\|_{HS}}{\langle\nabla f,\nabla f\rangle+\varepsilon^2}\, d\gamma_n
\\
\le
\|g\|_\infty\bigl(\|Lf\|_p
+2\|D^2 f\|_p\bigr)
\|\bigl(|\nabla f|_H^2+\varepsilon^2\bigr)^{-1}\|_{p/(p-1)}
\\
\le
(c_1(p)+2)\|g\|_\infty\|f\|_{W^{p,2}(\gamma)}\|\bigl(|\nabla f|_H^2+\varepsilon^2\bigr)^{-1}\|_{p/(p-1)},
\end{multline}
where $L$ is the Ornstein--Uhlenbeck operator associated with the
standard Gaussian measure~$\gamma_n$.

For a general function $f\in W^{p,2}(\gamma)$, we can take a sequence
$f^n\in \mathcal{FC}^\infty(E)$ such that
$f^n\to f$ in $W^{p,2}(\gamma)$ which
also converges almost everywhere along with first and second derivatives.
Passing to the limit in the above inequality we obtain the same inequality for a general function
$f\in W^{p,2}(\gamma)$
and a function $g\in \mathcal{FC}^\infty(E)$.
Now, for a function $g\in W^{r,1}(\gamma)\cap L^\infty(\gamma)$ we can take
functions $g^n\in\mathcal{FC}^\infty(E)$ such that
$g^n\to g$ in $W^{r,1}(\gamma)$ and almost everywhere.
Let us consider function $\varphi\in C_0^\infty(\mathbb{R})$ such that
$\varphi(t) = t$ for $t\in [-\|g\|_\infty, \|g\|_\infty]$ and $|\varphi(t)|\le 2\|g\|_\infty$.
Then the sequence $\{\varphi(g^n)\}$ also converges to the function $g$ in $W^{r,1}(\gamma)$
and almost everywhere, $\|\varphi(g^n)\|_\infty\le 2\|g\|_\infty$. We can pass
to the limit in the above inequality
and obtain a similar estimate for general functions
$f\in W^{p,2}(\gamma)$ and
$g\in W^{r,1}(\gamma)\cap L^\infty(\gamma)$:
$$
\int \frac{\langle\nabla g, \nabla f\rangle_H} {|\nabla f|^2_H+\varepsilon^2}\, d\gamma
\le
c_2(p)\|g\|_\infty\|f\|_{W^{p,2}(\gamma)}\|(|\nabla f|_H^2+\varepsilon^2\bigr)^{-1}\|_{p/(p-1)}.
$$
By Lemma \ref{lem4.1} we have
$$
\|(|\nabla f|_H^2+\varepsilon^2)^{-1}\|_{p/(p-1)}
\le
2\|(|\nabla f|_H+\varepsilon)^{-2}\|_{p/(p-1)}
\le
c_3(p)\varepsilon^{-2}u_\gamma(|\nabla f|_H, \varepsilon)^{1-1/p}.
$$
Thus,
$$
\int \frac{\langle\nabla u, \nabla f\rangle_H} {|\nabla f|^2_H+\varepsilon^2}\, d\gamma
\le
c(p)a\|g\|_\infty\varepsilon^{-2}u_\gamma(|\nabla f|_H, \varepsilon)^{1-1/p}.
$$
The lemma is proved.
\end{proof}

\begin{theorem}\label{t3.1}
Let $p>1$, $a>0$.
Then there is a constant $c(p)$,
depending only on $p$, such that
for every function $f\in W^{p,2}(\gamma)$ with
$$
\|f\|_{W^{p,2}(\gamma)}\le a
$$
one has
$$
\sigma(\gamma\circ f^{-1}, t)
\le
c(p)a\, t\varepsilon^{-2}u_\gamma(|\nabla f|_H, \varepsilon)^{1-1/p}
+
4 u_\gamma(|\nabla f|_H, \varepsilon)
$$
for every number $\varepsilon>0$.
\end{theorem}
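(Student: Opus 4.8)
The plan is to reduce the bound on $\sigma(\gamma\circ f^{-1},t)$ to the integration-by-parts estimate of Lemma~\ref{lem3.1} by means of the standard Malliavin regularization trick. In the one-dimensional case $k=1$ the only unit vectors are $e=\pm1$ and $\partial_e\varphi=\pm\varphi'$, so by the definition of $\sigma$ it suffices to bound
$$
\int_{\mathbb{R}}\varphi'\, d(\gamma\circ f^{-1}) = \int_E \varphi'(f)\, d\gamma
$$
uniformly over $\varphi\in C_0^\infty(\mathbb{R})$ with $\|\varphi\|_\infty\le t$ and $\|\varphi'\|_\infty\le1$, and then pass to the supremum.

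First I would invoke the chain rule $\nabla(\varphi(f))=\varphi'(f)\nabla f$, which gives $\langle\nabla(\varphi(f)),\nabla f\rangle_H=\varphi'(f)|\nabla f|_H^2$. Adding and subtracting $\varepsilon^2$ in the denominator produces the identity
$$
\varphi'(f)=\frac{\langle\nabla(\varphi(f)),\nabla f\rangle_H}{|\nabla f|_H^2+\varepsilon^2}
+\varphi'(f)\,\frac{\varepsilon^2}{|\nabla f|_H^2+\varepsilon^2},
$$
and integrating against $\gamma$ splits the problem into two terms. For the first term I set $g=\varphi(f)$; since $\varphi\in C_0^\infty(\mathbb{R})$ and $f\in W^{p,2}(\gamma)\subset W^{p,1}(\gamma)$, the function $g$ lies in $W^{p,1}(\gamma)\cap L^\infty(\gamma)$ with $\|g\|_\infty\le\|\varphi\|_\infty\le t$, so Lemma~\ref{lem3.1} applies and yields exactly the first summand $c(p)a\,t\varepsilon^{-2}u_\gamma(|\nabla f|_H,\varepsilon)^{1-1/p}$.

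For the second term I would estimate $|\varphi'(f)|\le1$ and use the elementary inequality $|\nabla f|_H^2+\varepsilon^2\ge\tfrac12(|\nabla f|_H+\varepsilon)^2$, which follows from $2\varepsilon|\nabla f|_H\le|\nabla f|_H^2+\varepsilon^2$, to obtain
$$
\int_E\varphi'(f)\,\frac{\varepsilon^2}{|\nabla f|_H^2+\varepsilon^2}\, d\gamma
\le 2\varepsilon^2\int_E(|\nabla f|_H+\varepsilon)^{-2}\, d\gamma.
$$
Applying Lemma~\ref{lem4.1} with the choice $g=|\nabla f|_H$ and $r=2$ bounds the last integral by $2\varepsilon^{-2}u_\gamma(|\nabla f|_H,\varepsilon)$, so this term is at most $4u_\gamma(|\nabla f|_H,\varepsilon)$. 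Adding the two contributions and taking the supremum over admissible $\varphi$ and over $e=\pm1$ gives the stated inequality. I expect no serious obstacle here: the only point requiring care is the verification that $g=\varphi(f)\in W^{p,1}(\gamma)\cap L^\infty(\gamma)$ so that Lemma~\ref{lem3.1} legitimately applies, but this is immediate from the chain rule together with the boundedness of $\varphi$ and $\varphi'$. The genuine content of the argument is the choice of the regularizing splitting, after which the two preparatory lemmas do all the work.
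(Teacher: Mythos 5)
Your proof is correct and takes essentially the same route as the paper: the same splitting of $\varphi'(f)$ via the factor $|\nabla f|_H^2/(|\nabla f|_H^2+\varepsilon^2)$, with Lemma~\ref{lem3.1} applied to $g=\varphi\circ f$ (using $\|g\|_\infty\le t$) for the first term and Lemma~\ref{lem4.1} for the second. Your explicit derivation of the constant $4$, via $(|\nabla f|_H+\varepsilon)^2\le 2(|\nabla f|_H^2+\varepsilon^2)$ and Lemma~\ref{lem4.1} with $r=2$, simply spells out the step the paper leaves implicit.
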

\begin{proof}
For
all $\varphi\in C_0^\infty(\mathbb{R})$ and
$\varepsilon>0$, we can write
$$
\int \varphi'(f)\, d\gamma=
\int \varphi'(f)\frac{\langle\nabla f,\nabla f\rangle_H }
{|\nabla f|_H^2+\varepsilon^2}\, d\gamma
+\varepsilon^2\int \frac{\varphi'(f)}{|\nabla f|_H^2+\varepsilon^2}\, d\gamma.
$$
For the first term, by Lemma \ref{lem3.1}, we have
$$
\int \frac{\langle \nabla(\varphi\circ f), \nabla f\rangle_H }{|\nabla f|^2_H+\varepsilon^2}\, d\gamma
\le
c(p)a\|\varphi\|_\infty\varepsilon^{-2}u_\gamma(|\nabla f|_H, \varepsilon)^{1-1/p}.
$$
The second term, by Lemma \ref{lem4.1}, does not exceed
$$
4\|\varphi'\|_\infty u_\gamma(|\nabla f|_H, \varepsilon).
$$
Therefore,
$$
\int \varphi'(f)\, d\gamma
\le
c(p)a\|\varphi\|_\infty\varepsilon^{-2}u_\gamma(|\nabla f|_H, \varepsilon)^{1-1/p}
+
4\|\varphi'\|_\infty u_\gamma(|\nabla f|_H, \varepsilon).
$$
The theorem is proved.
\end{proof}

Since $u_\gamma(|\nabla f|_H, \varepsilon)\le1$, taking
$\varepsilon=\sqrt t$ in the previous theorem,
we obtain the following result.

\begin{corollary}\label{c3.1}
Let $p>1$, $a>0$.
Then there is a constant $c(p)$,
depending only on $p$, such that
for every function $f\in W^{p,2}(\gamma)$ with
$$
\|f\|_{W^{p,2}(\gamma)}\le a
$$
one has
$$
\sigma(\gamma\circ f^{-1}, t)
\le
(c(p)a + 4) u_\gamma\bigl(|\nabla f|_H, \sqrt t\bigr)^{1-1/p}.
$$
\end{corollary}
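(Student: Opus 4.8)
The plan is to specialize Theorem \ref{t3.1} at a single well-chosen value of $\varepsilon$ and then consolidate the two resulting terms. Theorem \ref{t3.1} already supplies, for every $\varepsilon>0$, the bound
$$
\sigma(\gamma\circ f^{-1}, t)
\le
c(p)a\, t\varepsilon^{-2}u_\gamma(|\nabla f|_H, \varepsilon)^{1-1/p}
+
4 u_\gamma(|\nabla f|_H, \varepsilon),
$$
and the only freedom left is the parameter $\varepsilon$. First I would set $\varepsilon=\sqrt t$, which is precisely the choice that makes the prefactor $t\varepsilon^{-2}$ of the leading term equal to $1$, removing the explicit $t$-dependence there and turning that term into $c(p)a\, u_\gamma(|\nabla f|_H, \sqrt t)^{1-1/p}$.

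It then remains to combine this with the second term $4 u_\gamma(|\nabla f|_H, \sqrt t)$, and here the key observation is that $u_\gamma(|\nabla f|_H, \sqrt t)\le 1$ directly from the definition of $u_\gamma$: since $\gamma$ is a probability measure the integrand is at most $(s+1)^{-2}$, and $\int_0^\infty (s+1)^{-2}\, ds = 1$. Because $p>1$ forces $1-1/p\in(0,1)$, for any $x\in[0,1]$ one has $x\le x^{1-1/p}$; applying this with $x=u_\gamma(|\nabla f|_H, \sqrt t)$ lets me bound the second term by $4 u_\gamma(|\nabla f|_H, \sqrt t)^{1-1/p}$. Adding the two estimates and collecting constants then yields the claimed inequality with the single factor $c(p)a+4$.

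I do not anticipate any genuine obstacle: the entire content is the substitution $\varepsilon=\sqrt t$ together with the elementary inequality $x\le x^{1-1/p}$ on $[0,1]$. The only point deserving a moment's care is verifying $u_\gamma\le 1$, which is what justifies lowering the exponent of the second term from $1$ to $1-1/p$ without reversing the inequality, so that both terms carry the same power $u_\gamma^{1-1/p}$ and can be merged.
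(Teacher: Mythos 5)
Your proposal is correct and is essentially identical to the paper's own argument: the paper also obtains the corollary by substituting $\varepsilon=\sqrt t$ into Theorem \ref{t3.1} and using $u_\gamma(|\nabla f|_H,\sqrt t)\le 1$ to replace the second term's exponent $1$ by $1-1/p$, merging the constants into $c(p)a+4$. Your explicit verification that $u_\gamma\le 1$ (from $\gamma$ being a probability measure and $\int_0^\infty(s+1)^{-2}\,ds=1$) simply spells out what the paper leaves implicit.
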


The following corollary provides
a quantitative bound in the following result from \cite{BZ}:
convergence in distribution
of random variables $f_n$ from a certain Sobolev class
implies convergence in variation under some uniform nondegeneracy assumption
and uniform boundedness of their Sobolev norms.

\begin{corollary}\label{c3.2}
Let $p>1$ and
let $f_n\in W^{p,2}(\gamma)$ be a sequence such that
$$
\sup_n\|f_n\|_{W^{p,2}(\gamma)}= a<\infty, \quad
\delta(\varepsilon):=\sup_n\gamma(|\nabla f_n|_H\le\varepsilon)\to0.
$$
Assume that the sequence of distributions $\gamma\circ f_n^{-1}$
converges weakly to the measure $\nu$
(equivalently $\|\gamma\circ f_n^{-1} - \nu\|_{\rm KR}\to0$).
Then $\|\gamma\circ f_n^{-1} - \nu\|_{\rm TV}\to0$ and there is a constant $C(p,a)$ such that
$$
\|\gamma\circ f_n^{-1} - \nu\|_{\rm TV}
\le C(p,a)\Bigl(\bigl[\delta\bigl(\|\gamma\circ f_n^{-1} - \nu\|_{\rm KR}^{1/8}\bigr)\bigr]^{1-1/p}  +
\|\gamma\circ f_n^{-1} - \nu\|_{\rm KR}^{(p-1)/(8p)}\Bigr).
$$
\end{corollary}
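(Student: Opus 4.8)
The plan is to combine the general comparison inequality of Lemma~\ref{lem2.1} with the quantitative smoothness bound of Corollary~\ref{c3.1}, and to control the limiting measure $\nu$ by a lower semicontinuity argument. Write $\mu_n := \gamma\circ f_n^{-1}$ and $\beta_n := \|\mu_n - \nu\|_{\rm KR}$. Applying Lemma~\ref{lem2.1} with $k=1$ gives, for every $\varepsilon\in(0,1)$,
\[
\|\mu_n - \nu\|_{\rm TV} \le 6\max\{\sigma(\mu_n,\varepsilon),\sigma(\nu,\varepsilon)\} + \varepsilon^{-1}\beta_n,
\]
so it remains to estimate both $\sigma(\mu_n,\varepsilon)$ and $\sigma(\nu,\varepsilon)$ uniformly in terms of $\delta$ and $\beta_n$.

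For the individual distributions $\mu_m$ I would apply Corollary~\ref{c3.1}, which yields $\sigma(\mu_m,\varepsilon)\le (c(p)a+4)\,u_\gamma(|\nabla f_m|_H,\sqrt\varepsilon)^{1-1/p}$. To handle the limiting measure, note that for any admissible test function $\varphi$ (with $\varphi\in C_0^\infty(\mathbb{R})$, $\|\varphi\|_\infty\le\varepsilon$, $\|\varphi'\|_\infty\le1$) the derivative $\varphi'$ is bounded and continuous, so the weak convergence $\mu_m\to\nu$ gives $\int\varphi'\,d\nu=\lim_m\int\varphi'\,d\mu_m\le\sup_m\sigma(\mu_m,\varepsilon)$; taking the supremum over $\varphi$ shows $\sigma(\nu,\varepsilon)\le\sup_m\sigma(\mu_m,\varepsilon)$. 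Thus both $\sigma(\mu_n,\varepsilon)$ and $\sigma(\nu,\varepsilon)$ are dominated by $(c(p)a+4)\sup_m u_\gamma(|\nabla f_m|_H,\sqrt\varepsilon)^{1-1/p}$.

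Next I would bound $u_\gamma$ uniformly in $m$ using the nondegeneracy assumption. Since $\gamma(|\nabla f_m|_H\le \eta s)\le\delta(\eta s)$ for all $m$, splitting the defining integral of $u_\gamma$ at $s=T$ and using monotonicity of $\delta$ together with $\int_0^\infty(s+1)^{-2}\,ds=1$ and $\int_T^\infty(s+1)^{-2}\,ds\le T^{-1}$ gives $\sup_m u_\gamma(|\nabla f_m|_H,\eta)\le\delta(\eta T)+T^{-1}$ for $\eta=\sqrt\varepsilon$. I would then make the choices $\varepsilon=\beta_n^{1/2}$ and $T=\beta_n^{-1/8}$, so that $\eta T=\beta_n^{1/8}$, $T^{-1}=\beta_n^{1/8}$ and $\varepsilon^{-1}\beta_n=\beta_n^{1/2}$. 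Substituting back and using the subadditivity $(x+y)^{1-1/p}\le x^{1-1/p}+y^{1-1/p}$ produces
\[
\|\mu_n-\nu\|_{\rm TV}\le 6(c(p)a+4)\bigl(\delta(\beta_n^{1/8})^{1-1/p}+\beta_n^{(p-1)/(8p)}\bigr)+\beta_n^{1/2}.
\]
Since $(p-1)/(8p)<1/2$, for $\beta_n\in(0,1)$ one has $\beta_n^{1/2}\le\beta_n^{(p-1)/(8p)}$, which lets me absorb the last term and collect the claimed constant $C(p,a)$; the cases with $\beta_n\ge1$ are covered trivially by enlarging $C(p,a)$ since $\|\mu_n-\nu\|_{\rm TV}\le2$. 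Convergence in variation then follows because $\beta_n\to0$ forces $\delta(\beta_n^{1/8})\to0$ by the hypothesis $\lim_{\varepsilon\to0}\delta(\varepsilon)=0$.

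I expect the main obstacle to be the treatment of the limit measure $\nu$: unlike the $\mu_n$ it carries no Sobolev structure, so $\sigma(\nu,\varepsilon)$ cannot be bounded directly and must be controlled through the lower semicontinuity of $\sigma(\cdot,\varepsilon)$ under weak convergence. The remaining difficulty is purely bookkeeping, namely choosing $\varepsilon$ and $T$ as powers of $\beta_n$ so that the three error terms balance to the stated exponents $1/8$ and $(p-1)/(8p)$.
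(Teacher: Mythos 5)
Your proof is correct and its skeleton is the same as the paper's: writing $\beta_n:=\|\gamma\circ f_n^{-1}-\nu\|_{\rm KR}$, both arguments combine Lemma~\ref{lem2.1} with Corollary~\ref{c3.1}, split the integral defining $u_\gamma$ at $T=\beta_n^{-1/8}$, and choose $\varepsilon=\beta_n^{1/2}$, which produces exactly the stated exponents $1/8$ and $(p-1)/(8p)$. The one structural difference is how the limit measure $\nu$ is brought in: the paper applies the estimate to the pair $(\gamma\circ f_n^{-1},\gamma\circ f_m^{-1})$, for which Corollary~\ref{c3.1} is available on both sides, and then lets $m\to\infty$ (implicitly using lower semicontinuity of the total variation norm under weak convergence), whereas you apply Lemma~\ref{lem2.1} directly to $(\gamma\circ f_n^{-1},\nu)$ and bound $\sigma(\nu,\varepsilon)\le\sup_m\sigma(\gamma\circ f_m^{-1},\varepsilon)$ by testing the weak convergence against the bounded continuous functions $\varphi'$. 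Both devices rest on the same weak-convergence fact, so the divergence is cosmetic rather than substantive; if anything, your version makes explicit the semicontinuity that the paper's phrase ``passing to the limit'' leaves tacit, and you are more careful than the paper about the regime $\beta_n\ge1$, which does need separate (trivial) treatment since Lemma~\ref{lem2.1} requires $\varepsilon\in(0,1)$.
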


\begin{proof}
By Lemma \ref{lem2.1} and Corollary \ref{c3.1} we have
$$
\|\gamma\circ f_n^{-1} - \gamma\circ f_m^{-1}\|_{\rm TV}
\le 6(c(p)a + 4)\Bigl(\int_0^\infty(s+1)^{-2}\delta(s\sqrt\varepsilon)\, ds\Bigr)^{1-1/p}  +
\varepsilon^{-1}\|\gamma\circ f_n - \gamma\circ f_m^{-1}\|_{\rm KR}.
$$
Passing to the limit as $m\to\infty$,
we obtain a similar estimate with $\nu$ in place of $\gamma\circ f_m^{-1}$.
We now note that
\begin{multline*}
\int_0^\infty(s+1)^{-2}\delta(s\sqrt\varepsilon)\, ds =
\int_0^{\varepsilon^{-1/4}}(s+1)^{-2}\delta(s\sqrt\varepsilon)\, ds
+\int_{\varepsilon^{-1/4}}^\infty(s+1)^{-2}\delta(s\sqrt\varepsilon)\, ds
\\
\le
\delta(\varepsilon^{1/4}) + \frac{\varepsilon^{1/4}}{\varepsilon^{1/4}+1}
\le
\delta(\varepsilon^{1/4}) + \varepsilon^{1/4}.
\end{multline*}
Taking $\varepsilon = \|\gamma\circ f_n^{-1} - \nu\|_{\rm KR}^{1/2}$ we get
$$
\|\gamma\circ f_n^{-1} - \nu\|_{\rm TV}
\le 12(c(p)a + 4)\Bigl(\bigl[\delta\bigl(\|\gamma\circ f_n^{-1} - \nu\|_{\rm KR}^{1/8}\bigr)\bigr]^{1-1/p}  +
\|\gamma\circ f_n^{-1} - \nu\|_{\rm KR}^{1/8-1/8p}\Bigr).
$$
The corollary is proved.
\end{proof}

The following corollary gives the
Nikolskii--Besov smoothness of $\gamma\circ f^{-1}$ under
the assumption of $\gamma$-integrability of $|\nabla f|_H^{-1}$ to some power $\theta\in(0,1)$.
This result generalizes \cite[Theorem 5.1]{BKZ} to the case of general Sobolev functions.

\begin{corollary}\label{c3.4}
Let $p>1$, $a,b>0$, $\theta\in(0, 1)$.
Set $\alpha:=\frac{p\theta}{2p+\theta}$.
There is a constant $C:=C(p, a, b, \theta)$ such that
for every function $f\in W^{p,2}(\gamma)$ with
$$
\|f\|_{W^{p,2}(\gamma)}\le a, \quad
\int |\nabla f|_H^{-\theta} \, d\gamma\le b
$$
one has
$$
\|(\gamma\circ f^{-1})_h-\gamma\circ f^{-1}\|_{\rm TV}\le C|h|^\alpha, \quad \forall h\in\mathbb{R}.
$$
Equivalently, the measure $\gamma\circ f^{-1}$ possesses a density
from the Nikolskii--Besov class $B^\alpha(\mathbb{R})$.
\end{corollary}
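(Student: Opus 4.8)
The plan is to reduce the total variation estimate to a bound on the modulus of continuity $\sigma(\gamma\circ f^{-1},\cdot)$ and then to use the integrability hypothesis to control the quantity $u_\gamma(|\nabla f|_H,\varepsilon)$ that appears in Theorem~\ref{t3.1}. By the first inequality in Theorem~\ref{t2.1} it suffices to establish a bound of the form $\sigma(\gamma\circ f^{-1},t)\le C t^\alpha$ valid for all $t>0$, since then
\[
\|(\gamma\circ f^{-1})_h-\gamma\circ f^{-1}\|_{\rm TV}\le 2\sigma(\gamma\circ f^{-1},|h|/2)\le C'|h|^\alpha .
\]
I would emphasize that the naive route through Corollary~\ref{c3.1} (taking $\varepsilon=\sqrt t$) yields only the weaker exponent $\tfrac{\theta(p-1)}{2p}<\alpha$; to reach the sharp value $\alpha=\tfrac{p\theta}{2p+\theta}$ one must keep $\varepsilon$ free in Theorem~\ref{t3.1} and optimize it at the end.

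The first real step is to estimate $u_\gamma(|\nabla f|_H,\varepsilon)$. Applying Chebyshev's inequality to the function $|\nabla f|_H^{-\theta}$ gives $\gamma(|\nabla f|_H\le\tau)\le b\,\tau^\theta$ for every $\tau>0$. Substituting this into the definition of $u_\gamma$ yields
\[
u_\gamma(|\nabla f|_H,\varepsilon)=\int_0^\infty (s+1)^{-2}\gamma\bigl(|\nabla f|_H\le\varepsilon s\bigr)\,ds
\le b\varepsilon^\theta\int_0^\infty (s+1)^{-2}s^\theta\,ds = C_\theta\,b\,\varepsilon^\theta,
\]
where $C_\theta:=\int_0^\infty (s+1)^{-2}s^\theta\,ds<\infty$, since $\theta<1<2$ guarantees integrability both at the origin and at infinity. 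Note that this bound holds for every $\varepsilon>0$, with no restriction on its size, which is what ultimately lets the final estimate be uniform in $t$.

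Inserting this into Theorem~\ref{t3.1} gives
\[
\sigma(\gamma\circ f^{-1},t)\le c(p)a\,(C_\theta b)^{1-1/p}\,t\,\varepsilon^{-2+\theta(1-1/p)}
+ 4C_\theta b\,\varepsilon^\theta ,
\]
and it remains to choose $\varepsilon$. The exponents of $\varepsilon$ in the two terms balance exactly when $t\sim\varepsilon^{2+\theta/p}=\varepsilon^{(2p+\theta)/p}$, so the natural choice is $\varepsilon=t^{p/(2p+\theta)}$. A direct computation of exponents then shows that both terms scale precisely like $t^{p\theta/(2p+\theta)}=t^\alpha$, giving $\sigma(\gamma\circ f^{-1},t)\le C t^\alpha$ for all $t>0$ with $C=C(p,a,b,\theta)$, which is the desired conclusion; the equivalence with membership in $B^\alpha(\mathbb{R})$ is the remark recorded after the definition of the Nikolskii--Besov space. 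The only genuinely delicate point is the exponent bookkeeping in this optimization together with the observation that, because the two ingredient bounds hold for \emph{all} $\varepsilon>0$, no separate treatment of large $|h|$ is required.
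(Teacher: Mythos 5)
Your proof is correct and follows essentially the same route as the paper: bound $u_\gamma(|\nabla f|_H,\varepsilon)\le c(b,\theta)\varepsilon^\theta$ via Chebyshev's inequality, insert this into Theorem~\ref{t3.1} with $\varepsilon$ kept free, choose $\varepsilon=t^{p/(2p+\theta)}$ so both terms scale as $t^\alpha$, and conclude with Theorem~\ref{t2.1}. Your side remarks (that the shortcut through Corollary~\ref{c3.1} only yields the weaker exponent $\theta(p-1)/(2p)$, and that no separate treatment of large $t$ is needed since the bounds hold for all $\varepsilon>0$) are accurate but not part of the paper's argument.
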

\begin{proof}
Under our assumptions, we have
$$
u_\gamma(|\nabla f|_H, \varepsilon):=\int_0^\infty (s+1)^{-2}\gamma\bigl(|\nabla f|_H \le \varepsilon s\bigr)\, ds
\le
\varepsilon^\theta b\int_0^\infty s^\theta(s+1)^{-2}\, ds = c(b,\theta)\varepsilon^\theta.
$$
By Theorem \ref{t3.1} for every $\varepsilon>0$, one has
\begin{multline*}
\sigma(\gamma\circ f^{-1}, t)
\le
c(p)a t\varepsilon^{-2}u_\gamma(|\nabla f|_H, \varepsilon)^{1-1/p}
+
4 u_\gamma(|\nabla f|_H, \varepsilon)
\\
\le
c(p)a\bigl(c(b, \theta)\bigr)^{1-1/p} t\varepsilon^{-2}\varepsilon^{\theta(1-1/p)}
+
4c(b,\theta)\varepsilon^\theta.
\end{multline*}
Taking $\varepsilon = t^{\frac{p}{2p +\theta}}$ and
applying Theorem \ref{t2.1}
we get the desired bound.
\end{proof}

The following corollary generalizes \cite[Theorem 5.2]{BKZ}.

\begin{corollary}\label{c3.5}
Let $p>1$, $a,b>0$, $\theta\in(0, 1)$.
Set $\beta:=\frac{p\theta}{(2+\theta)p+\theta}$.
There is a constant $C_1:=C_1(p, a, b, \theta)$ such that
such that, for every pair of functions $f, g\in W^{p,2}(\gamma)$ with
$$
\|f\|_{W^{p,2}(\gamma)}\le a, \quad \|g\|_{W^{p,2}(\gamma)}\le a,
\quad \int |\nabla f|_H^{-\theta} \, d\gamma\le b, \quad \int |\nabla g|_H^{-\theta} \, d\gamma\le b,
$$
one has
$$
\|\gamma\circ f^{-1} - \gamma\circ g^{-1}\|_{\rm TV}\le
C_1(p, a, b, \theta)\|\gamma\circ f^{-1} - \gamma\circ g^{-1}\|_{\rm KR}^\beta.
$$
\end{corollary}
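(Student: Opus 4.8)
The plan is to combine the pointwise bound on the modulus of continuity $\sigma(\gamma\circ f^{-1},\cdot)$ that is already established inside the proof of Corollary \ref{c3.4} with the comparison inequality of Lemma \ref{lem2.1}, and then to optimize over a single scale parameter. First I would observe that $f$ and $g$ enter the hypotheses completely symmetrically: both satisfy $\|f\|_{W^{p,2}(\gamma)}\le a$ and $\int|\nabla f|_H^{-\theta}\,d\gamma\le b$ (and likewise for $g$). Hence the estimate derived in the proof of Corollary \ref{c3.4}, namely the bound on $u_\gamma(|\nabla f|_H,\varepsilon)$ fed into Theorem \ref{t3.1} and optimized via the choice $\varepsilon=t^{p/(2p+\theta)}$, applies verbatim to each of them. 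Setting $\alpha:=\frac{p\theta}{2p+\theta}$, this yields a single constant $C=C(p,a,b,\theta)$ with
$$
\sigma(\gamma\circ f^{-1}, t)\le C t^\alpha,\qquad \sigma(\gamma\circ g^{-1}, t)\le C t^\alpha
$$
for all $t>0$. (This is exactly the $\sigma$-estimate that, through the first inequality of Theorem \ref{t2.1}, produced the Besov membership in Corollary \ref{c3.4}; here I keep it in its raw form.)

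Next I would apply Lemma \ref{lem2.1} with $k=1$ to $\mu=\gamma\circ f^{-1}$ and $\nu=\gamma\circ g^{-1}$. Writing $K:=\|\gamma\circ f^{-1}-\gamma\circ g^{-1}\|_{\rm KR}$, the second inequality of that lemma gives, for every $\varepsilon\in(0,1)$,
$$
\|\gamma\circ f^{-1}-\gamma\circ g^{-1}\|_{\rm TV}\le 6\max\{\sigma(\gamma\circ f^{-1},\varepsilon),\sigma(\gamma\circ g^{-1},\varepsilon)\}+\varepsilon^{-1}K\le 6C\varepsilon^\alpha+\varepsilon^{-1}K.
$$
The whole problem now reduces to balancing the two terms $6C\varepsilon^\alpha$ and $\varepsilon^{-1}K$ in the single parameter $\varepsilon$. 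The choice $\varepsilon=K^{1/(\alpha+1)}$ equalizes the two scales and produces
$$
\|\gamma\circ f^{-1}-\gamma\circ g^{-1}\|_{\rm TV}\le (6C+1)K^{\alpha/(\alpha+1)}.
$$
A direct computation gives $\frac{\alpha}{\alpha+1}=\frac{p\theta}{(2+\theta)p+\theta}=\beta$, which is precisely the claimed exponent, so $C_1=6C+1$ works on this range.

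The only genuine obstacle is an admissibility issue rather than a real difficulty. Lemma \ref{lem2.1} requires $\varepsilon\in(0,1)$, whereas the optimal value $\varepsilon=K^{1/(\alpha+1)}$ is below $1$ only when $K<1$. This is harmless: for probability measures one always has $\|\mu-\nu\|_{\rm TV}\le2$, so whenever $K\ge1$ the inequality $\|\gamma\circ f^{-1}-\gamma\circ g^{-1}\|_{\rm TV}\le 2\le 2K^\beta$ holds trivially, and it suffices to take $C_1\ge\max\{6C+1,2\}$; in the remaining range $K<1$ the choice $\varepsilon=K^{1/(\alpha+1)}\in(0,1)$ is legitimate and the computation above applies. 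Apart from this case split, the argument is a one-parameter optimization in the same spirit as the single-measure estimate of Corollary \ref{c3.4}, the new ingredient being that the Kantorovich--Rubinstein term of Lemma \ref{lem2.1} supplies exactly the extra $\varepsilon^{-1}K$ contribution that lowers the Hölder exponent from $\alpha$ to $\beta=\alpha/(1+\alpha)$.
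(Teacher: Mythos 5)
Your proposal is correct and follows essentially the same route as the paper: extract the bound $\sigma(\gamma\circ f^{-1},\varepsilon)\le C\varepsilon^\alpha$ (with $\alpha=\frac{p\theta}{2p+\theta}$) from the proof of Corollary \ref{c3.4}, plug it into Lemma \ref{lem2.1}, and choose $\varepsilon=\|\gamma\circ f^{-1}-\gamma\circ g^{-1}\|_{\rm KR}^{1/(1+\alpha)}$ to get the exponent $\beta=\alpha/(1+\alpha)$. Your explicit treatment of the case $\|\gamma\circ f^{-1}-\gamma\circ g^{-1}\|_{\rm KR}\ge1$ (where the optimal $\varepsilon$ would exit $(0,1)$, the range required by Lemma \ref{lem2.1}) is a small point of care that the paper's own proof leaves implicit.
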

\begin{proof}
By Lemma \ref{lem2.1} for each $\varepsilon\in(0,1)$ one has
\begin{multline*}
\|\gamma\circ f^{-1} - \gamma\circ g^{-1}\|_{\rm TV}
\le 6\max\{\sigma(\gamma\circ f^{-1}, \varepsilon),\sigma(\gamma\circ g^{-1}, \varepsilon)\}  +
\varepsilon^{-1}\|\gamma\circ f^{-1} - \gamma\circ g^{-1}\|_{\rm KR}
\\
\le
6C(p, a,b, \theta) \varepsilon^\alpha + \varepsilon^{-1}\|\gamma\circ f^{-1} - \gamma\circ g^{-1}\|_{\rm KR},
\end{multline*}
where $\alpha= \frac{p\theta}{2p+\theta}$.
Taking $\varepsilon = \|\gamma\circ f^{-1} - \gamma\circ g^{-1}\|_{\rm KR}^{\frac{1}{1+\alpha}}$
we get the desired bound.
\end{proof}

\section{Multidimensional case}

We now proceed to the case of multidimensional mappings $f=(f_1, \ldots, f_k)\colon E\to \mathbb{R}^k$
and the properties of their distributions $\gamma\circ f^{-1}$ on $\mathbb{R}^k$.

We start with the following analog of Lemma \ref{lem3.1}.

\begin{lemma}\label{lem4.2}
Let $k\in\mathbb{N}$, $p>1$, $q>1$, $r\ge1$, $a>0$.
Then there exists a number  $C_0:=C_0(k,p,q,a)>0$
such that, for every mapping $f = (f_1, \ldots, f_k)\colon\,
E\to\mathbb{R}^k$, where $f_i\in W^{p,2}(\gamma)$ and
$$
\|f\|_{W^{p,2}(\gamma)}:=\max_{i=1,\ldots, k}\bigl(\|f_i\|_{W^{p,2}(\gamma)}\bigr)\le a,
$$
for every pair of functions $u\in W^{r,1}(\gamma)\cap L^\infty(\gamma)$,
$v\in W^{q,1}(\gamma)$ with
$1/q+1/p+1/r=1$, $1-1/q-(2k+1)/p>0$ and for every number $\varepsilon\in(0,1)$,
one has
$$
\int_E \frac{\langle\nabla u, \nabla f_j\rangle_H v}{\Delta_f+\varepsilon}\, d\gamma
\le
C_0\|u\|_\infty\|v\|_{W^{q,1}(\gamma)}\varepsilon^{-2}u_\gamma(\Delta_f, \varepsilon)^{1-1/q-(2k+1)/p}.
$$
\end{lemma}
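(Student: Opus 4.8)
The plan is to mirror the integration-by-parts scheme of Lemma~\ref{lem3.1}, the new feature being the determinant $\Delta_f$ in the denominator. First I would reduce to the case where $f_1,\dots,f_k$, $u$, and $v$ all lie in $\mathcal{FC}^\infty(E)$ and depend on the same coordinates $\ell_1,\dots,\ell_n$, so that every integral is taken over $\mathbb{R}^n$ against $\gamma_n$ and $L$ is the finite-dimensional Ornstein--Uhlenbeck operator. Setting $\psi:=v/(\Delta_f+\varepsilon)$ and writing the integrand as $\langle\nabla u,\psi\nabla f_j\rangle$, Gaussian integration by parts exactly as in the proof of Lemma~\ref{lem3.1} (moving $\nabla$ off $u$, using $\langle x,\nabla f_j\rangle-\Delta f_j=-Lf_j$ and the Leibniz rule for $\nabla\psi$) yields
\[
\int \frac{\langle\nabla u,\nabla f_j\rangle v}{\Delta_f+\varepsilon}\,d\gamma_n
=-\int u\Bigl(\frac{vLf_j+\langle\nabla v,\nabla f_j\rangle}{\Delta_f+\varepsilon}
-\frac{v\,\langle\nabla\Delta_f,\nabla f_j\rangle}{(\Delta_f+\varepsilon)^2}\Bigr)\,d\gamma_n .
\]
After bounding $|u|\le\|u\|_\infty$, I would estimate the three resulting integrals separately by H\"older's inequality.

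The two terms carrying the denominator $\Delta_f+\varepsilon$ are the mild ones. For them I would apply H\"older with the triple of exponents fixed by the hypothesis $1/q+1/p+1/r=1$: the factor $v$ (respectively $\nabla v$) goes into $L^q$ and is controlled by $\|v\|_{W^{q,1}(\gamma)}$; the factor $Lf_j$ (respectively $\nabla f_j$) goes into $L^p$ and is controlled by $\|f\|_{W^{p,2}(\gamma)}\le a$, using $\|Lf_j\|_p\le c_1(p)\|f_j\|_{W^{p,2}(\gamma)}$; and $(\Delta_f+\varepsilon)^{-1}$ goes into $L^r$. By Lemma~\ref{lem4.1} (with $r\ge1$) one has $\|(\Delta_f+\varepsilon)^{-1}\|_r\le r^{1/r}\varepsilon^{-1}u_\gamma(\Delta_f,\varepsilon)^{1/r}$, and since $1/r=1-1/q-1/p\ge 1-1/q-(2k+1)/p$, $u_\gamma\le1$, and $\varepsilon^{-1}\le\varepsilon^{-2}$ for $\varepsilon\in(0,1)$, both terms are already dominated by the claimed right-hand side.

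The third term is the genuine one, and it is where the determinant structure must be used. Differentiating $\Delta_f=\det M_f$ and using $\nabla m_{i,j}=D^2f_i\cdot\nabla f_j+D^2f_j\cdot\nabla f_i$, I would observe that $\langle\nabla\Delta_f,\nabla f_j\rangle$ is a finite sum, with a combinatorial constant depending only on $k$, of terms each a product of exactly $2k$ first-order factors $|\nabla f_i|_H$ and one second-order factor $|D^2f_i|_{HS}$; in particular $|\langle\nabla\Delta_f,\nabla f_j\rangle|\le c(k)\max_i|D^2f_i|_{HS}\,(\max_i|\nabla f_i|_H)^{2k}$. Applying H\"older with weight $1/q$ on $v$, weight $1/p$ on each of the $2k+1$ factors produced by $f$, and weight $1/s:=1-1/q-(2k+1)/p$ on $(\Delta_f+\varepsilon)^{-2}$ --- admissible precisely because the hypothesis guarantees $1-1/q-(2k+1)/p>0$, whence $s>1$ --- each $f$-factor is bounded by $a$, while Lemma~\ref{lem4.1} applied with the exponent $2s>1$ gives
\[
\bigl\|(\Delta_f+\varepsilon)^{-2}\bigr\|_s
=\Bigl(\int(\Delta_f+\varepsilon)^{-2s}\,d\gamma\Bigr)^{1/s}
\le (2s)^{1/s}\varepsilon^{-2}u_\gamma(\Delta_f,\varepsilon)^{1-1/q-(2k+1)/p}.
\]
This reproduces exactly the factor $\varepsilon^{-2}u_\gamma(\Delta_f,\varepsilon)^{1-1/q-(2k+1)/p}$, and collecting all constants gives $C_0=C_0(k,p,q,a)$.

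Finally I would lift the cylinder-function reduction by approximation, following Lemma~\ref{lem3.1} verbatim: approximate $f_i$ in $W^{p,2}(\gamma)$, $v$ in $W^{q,1}(\gamma)$, and $u$ in $W^{r,1}(\gamma)$ by functions from $\mathcal{FC}^\infty(E)$, composing the approximants of $u$ with a fixed smooth truncation so as to keep $\|u\|_\infty$ under control, and then pass to the limit. I expect this limiting step, rather than the algebra, to be the main obstacle: one must check that $\Delta_{f^m}\to\Delta_f$ and $\nabla\Delta_{f^m}\to\nabla\Delta_f$ in a strong enough sense for the integrals to converge, particularly the third one where $\Delta_f$ is both squared in the denominator and differentiated in the numerator. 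Keeping $\varepsilon>0$ fixed throughout, so that the denominators stay bounded below by $\varepsilon$ and $\varepsilon^2$, together with the uniform Sobolev bounds, is what makes the passage to the limit legitimate.
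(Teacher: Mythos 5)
Your proposal is correct and takes essentially the same approach as the paper's proof: the same reduction to cylinder functions, the same integration by parts producing the same three terms, the same H\"older exponents (including $1/(1-1/q-(2k+1)/p)$ on $(\Delta_f+\varepsilon)^{-2}$), the same application of Lemma~\ref{lem4.1} with exponent $2s$, and the same truncation-plus-approximation limit passage. The only differences are cosmetic: you bound $\langle\nabla\Delta_f,\nabla f_j\rangle$ by cofactor expansion where the paper differentiates the determinant column-by-column and uses Hadamard's inequality (both yield $c(k)\bigl(\max_i|\nabla f_i|_H\bigr)^{2k}\max_i|D^2f_i|_{HS}$), and your concern about needing $\nabla\Delta_{f^m}\to\nabla\Delta_f$ in the limit step is unnecessary, since (as in Lemma~\ref{lem3.1}) one passes to the limit in the inequality obtained \emph{after} H\"older, where $\nabla\Delta_f$ no longer appears and a.e.\ convergence of $\Delta_{f^m}$ plus dominated convergence with the fixed $\varepsilon$ suffices.
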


\begin{proof}
We first assume that the functions
$u, v, f_i$, $i=1,2,\ldots, k$, belong to $\mathcal{FC}^\infty(E)$ and are of the form
$u = u(\ell_1, \ldots, \ell_n)$, $v = v(\ell_1, \ldots, \ell_n)$,
$f_i = f_i(\ell_1, \ldots, \ell_n)$, for $i=1,2,\ldots, k$.
Integrating by parts, we have
\begin{multline} \label{est4.1}
\int_E \frac{\langle\nabla u, \nabla f_j\rangle_H v}{\Delta_f+\varepsilon}\, d\gamma
=
\int_{\mathbb{R}^n} \frac{\langle\nabla u, \nabla f_j\rangle v}{\Delta_f+\varepsilon}\, d\gamma_n
\\
=-\int_{\mathbb{R}^n} u \Bigl(\frac{v Lf_j}
{\Delta_f+\varepsilon} -
\frac{v\langle\nabla f_j, \nabla \Delta_f\rangle}{(\Delta_f+\varepsilon)^{2}}
+\frac{\langle\nabla f_j, \nabla v \rangle}{\Delta_f+\varepsilon}\Bigr)\, d\gamma_n
\\
\le
\|u\|_\infty\int_{\mathbb{R}^n} |v||Lf_j|
(\Delta_f+\varepsilon)^{-1}\, d\gamma_n
+\|u\|_\infty\int_{\mathbb{R}^n} |v||\langle\nabla f_j, \nabla \Delta_f\rangle|
(\Delta_f+\varepsilon)^{-2}\, d\gamma_n
\\
+
\|u\|_\infty\int_{\mathbb{R}^n} |\langle\nabla f_j, \nabla v \rangle|
(\Delta_f+\varepsilon)^{-1}\, d\gamma_n,
\end{multline}
where $L$ is the Ornstein--Uhlenbeck operator associated with the
standard Gaussian measure~$\gamma_n$.
We now estimate each of these three terms.
The first term in (\ref{est4.1}) can be estimated from above~by
$$
\|u\|_\infty\|v\|_q\|Lf_j\|_p\|(\Delta_f+\varepsilon)^{-1}\|_{\frac{1}{1-1/p-1/q}}
\le
c_1(p)\|u\|_\infty\|v\|_{W^{q,1}(\gamma)}\|f_j\|_{W^{p,2}(\gamma)}\|(\Delta_f+\varepsilon)^{-1}\|_{\frac{1}{1-1/p-1/q}}.
$$
The third term in (\ref{est4.1}) can be estimated by
$$
\|u\|_\infty \|\nabla f_j\|_p\|\nabla v\|_q\|(\Delta_f+\varepsilon)^{-1}\|_{\frac{1}{1-1/p-1/q}}
\le
\|u\|_\infty \|f_j\|_{W^{p,2}(\gamma)}\|v\|_{W^{q,1}(\gamma)}\|(\Delta_f+\varepsilon)^{-1}\|_{\frac{1}{1-1/p-1/q}},
$$

To estimate the second term in (\ref{est4.1}) we need
to estimate the gradient of the determinant. We note that for an
arbitrary matrix $C$, one has
$|\det C|\le\prod_i |c^i|$, where $\{c^i\}$ are columns of the matrix $C$. We have
$\langle\nabla f_j, \nabla \Delta_f\rangle = \sum_i \det C_i$, where $C_i=\{c_i^{m,r}\}$ is the matrix such that
$c_i^{m,r} = \langle\nabla f_m, \nabla f_r\rangle$ for $r\ne i$ and
$c_i^{m,i} = \langle D^2f_m\cdot\nabla f_i, \nabla f_j\rangle + \langle D^2f_i\cdot\nabla f_m, \nabla f_j\rangle$.
Thus,
\begin{multline*}
|\langle\nabla f_j, \nabla \Delta_f\rangle| \le \sum_i |\det C_i|\le
\sum_i \prod_r |c^r_i|\\
\le \Bigl(\sum_m|\nabla f_m| \Bigr)^{2(k-1)}\sum_i\sum_m|\langle D^2f_m\cdot\nabla f_i, \nabla f_j\rangle|
+ |\langle D^2f_i\cdot\nabla f_m, \nabla f_j\rangle|
\\
\le
2|\nabla f_j|\Bigl(\sum_m|\nabla f_m| \Bigr)^{2k-1}\sum_i\|D^2f_i\|_{HS}
\le
2\Bigl(\sum_m|\nabla f_m| \Bigr)^{2k}\sum_i\|D^2f_i\|_{HS}.
\end{multline*}
So, the second term in (\ref{est4.1}) is estimated by
\begin{multline*}
2\|u\|_\infty
\int_{\mathbb{R}^n} |v|\Bigl(\sum_m|\nabla f_m| \Bigr)^{2k}\Bigl(\sum_i\|D^2f_i\|_{HS}\Bigr)
(\Delta_f+\varepsilon)^{-2}\, d\gamma_n
\\
\le
2\|u\|_\infty\|v\|_q\Bigl\|\sum_m|\nabla f_m| \Bigr\|_p^{2k}\Bigl\|\sum_i\|D^2f_i\|_{HS}\Bigr\|_p
\|(\Delta_f+\varepsilon)^{-2}\|_{\frac{1}{1-1/q-(2k+1)/p}}
\\
\le
c_2(k)\|u\|_\infty\|v\|_{W^{q,1}(\gamma)}\|f\|_{W^{p,2}(\gamma)}^{2k+1}
\|(\Delta_f+\varepsilon)^{-2}\|_{\frac{1}{1-1/q-(2k+1)/p}}
\end{multline*}
for some constant $c_2(k)$, which depends only on $k$.

Therefore, we have
\begin{multline*}
\int \frac{\langle\nabla u, \nabla f_j\rangle v}{\Delta_f+\varepsilon}\, d\gamma
\le
\bigl(c_1(p)+1\bigr)\|u\|_\infty\|v\|_{W^{q,1}(\gamma)}\|f_j\|_{W^{p,2}(\gamma)}\|(\Delta_f+\varepsilon)^{-1}\|_{\frac{1}{1-1/p-1/q}}
\\
+
c_2(k)\|u\|_\infty\|v\|_{W^{q,1}(\gamma)}\|f\|_{W^{p,2}(\gamma)}^{2k+1}
\|(\Delta_f+\varepsilon)^{-2}\|_{\frac{1}{1-1/q-(2k+1)/p}}
\end{multline*}
for functions
$u, v, f_i \in \mathcal{FC}^\infty(E)$, $i=1,2,\ldots, k$.
For general functions $f_i\in W^{p,2}(\gamma)$, $v\in W^{q,1}(\gamma)$, we can take sequences
$f_i^n\in \mathcal{FC}^\infty(E)$, $v^n\in \mathcal{FC}^\infty(E)$ such that
$f_i^n\to f_i$ in $W^{p,2}(\gamma)$, $v^n\to v$ in $W^{q,1}(\gamma)$ and both sequences
(along with the sequences of their derivatives)
also converge almost everywhere.
Passing to the limit in the above inequality we obtain the same inequality for general functions
$f_i\in W^{p,2}(\gamma)$, $i=1,2,\ldots, k$, $v\in W^{q,1}(\gamma)$,
and functions $u\in \mathcal{FC}^\infty(E)$.
Now, for a function $u\in W^{r,1}(\gamma)\cap L^\infty(\gamma)$, we can take
functions $u^n\in\mathcal{FC}^\infty(E)$ such that
$u^n\to u$ in $W^{r,1}(\gamma)$ and almost everywhere.
Let us consider a function $\varphi\in C_0^\infty(\mathbb{R})$ such that
$\varphi(t) = t$ for $t\in [-\|u\|_\infty, \|u\|_\infty]$ and $|\varphi(t)|\le 2\|u\|_\infty$.
Then, the sequence $\{\varphi(u^n)\}$ also converges to the function $u$ in $W^{r,1}(\gamma)$
and almost everywhere, $\|\varphi(u^n)\|_\infty\le 2\|u\|_\infty$. We can pass
to the limit in the above inequality
and obtain a similar estimate for general functions
$f_i\in W^{p,2}(\gamma)$, $i=1,2,\ldots, k$, $v\in W^{q,1}(\gamma)$,
$u\in W^{r,1}(\gamma)\cap L^\infty(\gamma)$:
\begin{multline*}
\int_E \frac{\langle\nabla u, \nabla f_j\rangle_H v}{\Delta_f+\varepsilon}\, d\gamma
\le
c_3(k,p)\|u\|_\infty\|v\|_{W^{q,1}(\gamma)}\bigl(\|f_j\|_{W^{p,2}(\gamma)}\|(\Delta_f+\varepsilon)^{-1}\|_{\frac{1}{1-1/p-1/q}}
\\
+
\|f\|_{W^{p,2}(\gamma)}^{2k+1}
\|(\Delta_f+\varepsilon)^{-2}\|_{\frac{1}{1-1/q-(2k+1)/p}}\bigr),
\end{multline*}
with $c_3(k, p) = 2\bigl(c_1(p)+1\bigr)+ 2c_2(k)$.

By Lemma \ref{lem4.1} we have
$$
\|(\Delta_f+\varepsilon)^{-1}\|_{\frac{1}{1-1/p-1/q}}
\le
2\varepsilon^{-1}u_\gamma(\Delta_f, \varepsilon)^{1-1/p-1/q}.
$$
and
$$
\|(\Delta_f+\varepsilon)^{-2}\|_{\frac{1}{1-1/q-(2k+1)/p}}
\le
3\varepsilon^{-2}u_\gamma(\Delta_f, \varepsilon)^{1-1/q-(2k+1)/p}.
$$
Since $\varepsilon\le1$ and $u_\gamma(\Delta_f, \varepsilon)\le1$
we have
$$
\varepsilon^{-1}u_\gamma(\Delta_f, \varepsilon)^{1-1/p-1/q}\le\varepsilon^{-2}u_\gamma(\Delta_f, \varepsilon)^{1-1/q-(2k+1)/p}.
$$
Thus,
$$
\int_E \frac{\langle\nabla u, \nabla f_j\rangle_H v}{\Delta_f+\varepsilon}\, d\gamma
\le
C_0(k,p,q, a)\|u\|_\infty\|v\|_{W^{q,1}(\gamma)}\varepsilon^{-2}u_\gamma(\Delta_f, \varepsilon)^{1-1/q-(2k+1)/p}
$$
with
$C_0(k,p,q, a)
=
c_3(k,p)(2a+3a^{2k+1})$.
The lemma is proved.
\end{proof}

\begin{theorem}\label{t4.1}
Let $k\in\mathbb{N}$, $a>0$, and $p>4k-1$. Then there exists a number  $C_1:=C_1(p, k, a)>0$
such that, for every mapping $f = (f_1, \ldots, f_k)\colon\,
E\to\mathbb{R}^k$, where $f_i\in W^{p,2}(\gamma)$ and
$$
\|f\|_{W^{p,2}(\gamma)}:=\max_{i=1,\ldots, k}\bigl(\|f_i\|_{W^{p,2}(\gamma)}\bigr)\le a,
$$
for every $\varepsilon\in(0,1)$, one has
$$
\sigma(\gamma\circ f^{-1}, t)
\le
C_1t\varepsilon^{-2}u_\gamma(\Delta_f, \varepsilon)^{1-(4k-1)/p}
+
u_\gamma(\Delta_f, \varepsilon).
$$
\end{theorem}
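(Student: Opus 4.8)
The plan is to follow the one-dimensional argument of Theorem~\ref{t3.1} verbatim at the structural level, replacing $|\nabla f|_H^2$ by the Malliavin determinant $\Delta_f$ and replacing the scalar identity $\varphi'(f)|\nabla f|_H^2=\langle\nabla(\varphi\circ f),\nabla f\rangle_H$ by its matrix analogue built from the adjugate $A_f$. Fix a unit vector $e\in\mathbb{R}^k$ and $\varphi\in C_0^\infty(\mathbb{R}^k)$ with $\|\varphi\|_\infty\le t$ and $\|\partial_e\varphi\|_\infty\le1$. Since $\Delta_f=\det M_f$ is a Gram determinant it is nonnegative, so I can split
$$
\int\partial_e\varphi(f)\,d\gamma=\int\partial_e\varphi(f)\frac{\Delta_f}{\Delta_f+\varepsilon}\,d\gamma+\int\partial_e\varphi(f)\frac{\varepsilon}{\Delta_f+\varepsilon}\,d\gamma.
$$
The second (remainder) term is immediately controlled: $|\partial_e\varphi(f)|\le\|\partial_e\varphi\|_\infty\le1$, and Lemma~\ref{lem4.1} with $r=1$ gives $\varepsilon\int(\Delta_f+\varepsilon)^{-1}\,d\gamma\le u_\gamma(\Delta_f,\varepsilon)$, which is exactly the last summand in the asserted bound.

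The heart of the matter is the first term, which I would bring into the shape handled by Lemma~\ref{lem4.2}. By the chain rule $\nabla(\varphi\circ f)=\sum_j(\partial_j\varphi)(f)\nabla f_j$, so the vector with components $\langle\nabla(\varphi\circ f),\nabla f_i\rangle_H$ equals $M_f\,\bigl((\partial_1\varphi)(f),\dots,(\partial_k\varphi)(f)\bigr)$. Using the adjugate identity $A_fM_f=\Delta_f\,\mathrm{Id}$ (equivalent to~\eqref{inver}) and the symmetry of $A_f$, setting $v_i:=(A_fe)_i=\sum_j a_{i,j}e_j$ yields the pointwise identity
$$
\Delta_f\,\partial_e\varphi(f)=\sum_{i=1}^k v_i\,\langle\nabla(\varphi\circ f),\nabla f_i\rangle_H,
$$
valid $\gamma$-a.e.\ (both sides are polynomial in the gradients, so no invertibility of $M_f$ is needed). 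Consequently the first term becomes $\sum_i\int (\Delta_f+\varepsilon)^{-1}\langle\nabla(\varphi\circ f),\nabla f_i\rangle_H\,v_i\,d\gamma$, to which I apply Lemma~\ref{lem4.2} with $u=\varphi\circ f$ (so that $\|u\|_\infty\le t$) and $v=v_i$.

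It remains to choose the exponents and bound the cofactor factors. I would take $q=\tfrac{p}{2(k-1)}$, so that $1/q=2(k-1)/p$ and the Lemma~\ref{lem4.2} exponent becomes $1-1/q-(2k+1)/p=1-(4k-1)/p$; the hypothesis $p>4k-1$ guarantees positivity of this exponent and $q>1$, and $r$ is then fixed by $1/r=1-1/p-1/q$, which lies in $(0,1]$. Each $v_i$ is, up to the coefficients $e_j$ with $|e_j|\le1$, a cofactor of $M_f$, hence a polynomial of degree $k-1$ in the entries $m_{a,b}=\langle\nabla f_a,\nabla f_b\rangle_H$; thus $|v_i|\le C(k)\bigl(\sum_a|\nabla f_a|_H\bigr)^{2(k-1)}$ and, after one more Malliavin derivative, $|\nabla v_i|_H\le C(k)\bigl(\sum_a|\nabla f_a|_H\bigr)^{2k-3}\bigl(\sum_a\|D^2f_a\|_{HS}\bigr)$. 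Both are products of $2(k-1)$ factors lying in $L^p(\gamma)$ with norms at most $a$, so by Hölder $\|v_i\|_{W^{q,1}(\gamma)}\le C(k,a)$ precisely because $2(k-1)q=p$. Assembling the three estimates and taking the supremum over $\varphi$ and $e$ gives the claimed bound with $C_1=C_1(p,k,a)$.

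The main obstacle is purely bookkeeping: matching the two Hölder exponents in Lemma~\ref{lem4.2} to force the target exponent $1-(4k-1)/p$ while keeping $q>1$, $r\ge1$ and the cofactor bounds simultaneously valid. The conceptual content is entirely in the adjugate identity of the second paragraph, which is the multidimensional replacement for the one-line computation in Theorem~\ref{t3.1}. (When $k=1$ the cofactor $v_1=e_1$ is constant, so the $v_i$-regularity step is trivial and the scheme collapses to that of Theorem~\ref{t3.1}.)
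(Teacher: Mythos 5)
Your proposal is correct and follows essentially the same route as the paper's own proof: the same splitting of $\int\partial_e\varphi(f)\,d\gamma$ into a main term and an $\varepsilon$-remainder, the same adjugate identity $\Delta_f\,\partial_e\varphi(f)=\langle v, A_fe\rangle$, the same application of Lemma~\ref{lem4.2} with $u=\varphi\circ f$, $q=p/(2k-2)$, $r=(1-(2k-1)/p)^{-1}$, and Lemma~\ref{lem4.1} for the remainder. The only differences are cosmetic: you group the cofactor terms as $v_i=(A_fe)_i$ rather than summing over $a^{i,j}_fe_i$ separately, and you spell out the H\"older bookkeeping behind $\|v_i\|_{W^{q,1}(\gamma)}\le C(k,a)$, which the paper merely asserts.
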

\begin{proof}
Fix an arbitrary function $\varphi\in C_0^\infty(\mathbb{R}^k)$ with $\|\varphi\|_\infty\le t$,
$\|\partial_e\varphi\|_\infty\le1$,
and an arbitrary unit vector $e\in \mathbb{R}^k$.
It can be easily verified that
$$
M_f (\partial_1\varphi(f),\ldots, \partial_k\varphi(f)) =
\bigl(\langle\nabla (\varphi\circ f), \nabla f_1\rangle_H, \ldots,
\langle\nabla (\varphi\circ f), \nabla f_k\rangle_H\bigr).
$$
Here the left-hand side is interpreted as the standard product of
a matrix and a column vector.
Then by (\ref{inver}) we have
$$
(\partial_e\varphi)(f) \Delta_f =
\bigl\langle v , A_f e\bigr\rangle, \quad
v=\bigl(\langle\nabla (\varphi\circ f), \nabla f_1\rangle_H, \ldots,
\langle\nabla (\varphi\circ f), \nabla f_k\rangle_H\bigr)
$$
which yields the following equality:
$$
\Delta_f(\partial_e\varphi)(f) = \sum_{i,j}\langle\nabla (\varphi\circ f), \nabla f_j\rangle_H a^{i,j}_fe_i.
$$
For any fixed number $\varepsilon\in (0,1)$ we can write
\begin{equation}\label{ek4.1}
\int \partial_e\varphi(f)\, d\gamma
=
\int \partial_e\varphi(f)\frac{\Delta_f}{\Delta_f+\varepsilon}\, d\gamma+
\varepsilon\int \partial_e\varphi(f)(\Delta_f+\varepsilon)^{-1}\, d\gamma.
\end{equation}
For the first term by the above reasoning we have
$$
\int \partial_e\varphi\frac{\Delta_f}{\Delta_f+\varepsilon}\, d\gamma
=
\sum_{i,j}\int \frac{\langle\nabla (\varphi\circ f), \nabla f_j\rangle_H a^{i,j}_fe_i}
{\Delta_f+\varepsilon}\, d\gamma.
$$
We note that $a^{i,j}_fe_i\in W^{p/(2k-2),1}(\gamma)$ and there is a constant
$c_4(k)$ such that
$$
\|a^{i,j}_fe_i\|_{W^{p/(2k-2),1}(\gamma)}\le c_4(k)\|f\|_{W^{p,2}(\gamma)}^{2k-2}\le c_4(k)a^{2k-2}.
$$
We also note that $\varphi\circ f\in W^{p,1}(\gamma)$ and $(2k-2)/p + 1/p + 1/p\le1$. Hence
$\varphi\circ f\in W^{\frac{1}{1-(2k-1)/p},1}(\gamma)$ and
$\|\varphi\circ f\|_{W^{\frac{1}{1-(2k-1)/p},1}(\gamma)}\le \|\varphi\circ f\|_{W^{p,1}(\gamma)}$.
Moreover, we have
$$1-(2k-2)/p-(2k+1)/p = 1 - (4k-1)/p>0.
$$
Applying now Lemma \ref{lem4.2} with $r= (1- (2k-1)/p)^{-1}$ and $q = p/(2k-2)$ we obtain
$$
\int \partial_e\varphi\frac{\Delta_f}{\Delta_f+\varepsilon}\, d\gamma
\le
C_1(k, p, a)\|\varphi\|_\infty \varepsilon^{-2}u_\gamma(\Delta_f, \varepsilon)^{1-(4k-1)/p},
$$
with
$C_1(k,p, a) = k^2c_4(k)C_0(k,p,p/(2k-2), a)a^{2k-2}$.

Using Lemma \ref{lem4.1}, we can estimate the second term in (\ref{ek4.1}) in the following way:
$$
\varepsilon\int \partial_e\varphi(f)(\Delta_f+\varepsilon)^{-1}\, d\gamma \le
\|\partial_e\varphi\|_\infty u_\gamma(\Delta_f, \varepsilon)\le u_\gamma(\Delta_f, \varepsilon).
$$
Hence we have obtained the estimate
$$
\int \partial_e\varphi\, d\gamma \le
C_1(k, p, a)\|\varphi\|_\infty \varepsilon^{-2}u_\gamma(\Delta_f, \varepsilon)^{1-(4k-1)/p}
+
u_\gamma(\Delta_f, \varepsilon).
$$
Since $\|\varphi\|_\infty\le t$, the theorem is proved.
\end{proof}

Taking $\varepsilon=\sqrt{t}$ we get the following result.

\begin{corollary}\label{c4.1}
Let $k\in\mathbb{N}$, $a>0$, and $p>4k-1$. Then there exists a constant  $C:=C(p, k, a)>0$
such that, for every mapping $f = (f_1, \ldots, f_k)\colon\,
E\to\mathbb{R}^k$, where $f_i\in W^{p,2}(\gamma)$ and
$$
\|f\|_{W^{p,2}(\gamma)}:=\max_{i=1,\ldots, k}\bigl(\|f_i\|_{W^{p,2}(\gamma)}\bigr)\le a,
$$
for every $t\in(0,1)$, one has
$$
\sigma(\gamma\circ f^{-1}, t)
\le
C(p,k,a)u_\gamma(\Delta_f, \sqrt{t})^{1-(4k-1)/p}.
$$
\end{corollary}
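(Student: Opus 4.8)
The plan is to substitute $\varepsilon=\sqrt t$ directly into the estimate provided by Theorem \ref{t4.1}, exactly as the remark preceding the statement indicates. Since $t\in(0,1)$, we have $\sqrt t\in(0,1)$, so the hypothesis $\varepsilon\in(0,1)$ of Theorem \ref{t4.1} is met and the theorem applies verbatim. With this choice the prefactor of the first term simplifies as $t(\sqrt t)^{-2}=t\cdot t^{-1}=1$, while the second term is simply $u_\gamma(\Delta_f,\sqrt t)$. Thus I would obtain
$$
\sigma(\gamma\circ f^{-1}, t)\le C_1 u_\gamma(\Delta_f,\sqrt t)^{1-(4k-1)/p}+u_\gamma(\Delta_f,\sqrt t),
$$
where $C_1=C_1(p,k,a)$ is the constant furnished by Theorem \ref{t4.1}.

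First I would record the elementary bound $u_\gamma(\Delta_f,\sqrt t)\le 1$. This follows directly from the definition of $u_\gamma$: since $\gamma$ is a probability measure, $\gamma(\Delta_f\le\sqrt t\,s)\le 1$ for every $s\ge 0$, whence
$$
u_\gamma(\Delta_f,\sqrt t)\le\int_0^\infty(s+1)^{-2}\,ds=1.
$$
This is the same observation used to pass from Theorem \ref{t3.1} to Corollary \ref{c3.1}.

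The remaining step is to absorb the linear term $u_\gamma(\Delta_f,\sqrt t)$ into the term carrying the exponent $1-(4k-1)/p$. The hypothesis $p>4k-1$ guarantees $(4k-1)/p\in(0,1)$, hence the exponent $1-(4k-1)/p$ lies in $(0,1)$. For any real number $x\in[0,1]$ and any exponent $\alpha\in(0,1)$ one has $x\le x^{\alpha}$; applying this with $x=u_\gamma(\Delta_f,\sqrt t)$, which is legitimate by the previous paragraph, gives
$$
u_\gamma(\Delta_f,\sqrt t)\le u_\gamma(\Delta_f,\sqrt t)^{1-(4k-1)/p}.
$$
Combining the two displayed inequalities yields $\sigma(\gamma\circ f^{-1},t)\le(C_1+1)u_\gamma(\Delta_f,\sqrt t)^{1-(4k-1)/p}$, which is the asserted bound with $C(p,k,a):=C_1(p,k,a)+1$.

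There is no genuine obstacle here: the corollary is a routine specialization of Theorem \ref{t4.1}, and the only points requiring any care are the bookkeeping check that $\sqrt t\in(0,1)$ so that the theorem may be invoked, together with the two elementary facts that $u_\gamma(\Delta_f,\sqrt t)\le 1$ and that raising a number in $[0,1]$ to a power in $(0,1)$ only increases it. All dependence on the Sobolev norm bound $a$, the dimension $k$, and the exponent $p$ is already packaged inside the constant $C_1$ coming from Theorem \ref{t4.1}, so the substitution introduces no new dependence.
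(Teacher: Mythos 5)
Your proof is correct and follows exactly the paper's route: the paper likewise obtains Corollary \ref{c4.1} by setting $\varepsilon=\sqrt{t}$ in Theorem \ref{t4.1}, with the absorption of the term $u_\gamma(\Delta_f,\sqrt{t})$ into $u_\gamma(\Delta_f,\sqrt{t})^{1-(4k-1)/p}$ (via $u_\gamma\le 1$ and the exponent lying in $(0,1)$) left implicit, just as in the passage from Theorem \ref{t3.1} to Corollary \ref{c3.1}. Your write-up merely makes these elementary steps explicit.
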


The following corollary
is a multidimensional analog of Corollary \ref{c3.2}.
It asserts that convergence in distribution
of random vectors $f_n$ from a Sobolev class
implies convergence in variation provided they are uniformly nondegenerate
and uniformly bounded in the Sobolev norm.

\begin{corollary}\label{c4.2}
Let $k\in\mathbb{N}$, $a>0$, and $p>4k-1$.
Let $f_n=(f_{n,1}, \ldots, f_{n,k})\in W^{p,2}(\gamma)$ be a sequence of mappings such that
$$
\sup_n\|f_n\|_{W^{p,2}(\gamma)}= a<\infty, \quad
\delta(\varepsilon):=\sup_n\gamma(\Delta_{f_n}\le\varepsilon)\to0.
$$
Assume also that the sequence of distributions $\gamma\circ f_n^{-1}$ converges
weakly to some measure $\nu$
(equivalently, $\|\gamma\circ f_n^{-1} - \nu\|_{\rm KR}\to0$).
Then $\|\gamma\circ f_n^{-1} - \nu\|_{\rm TV}\to0$ and
$$
\|\gamma\circ f_n^{-1} - \nu\|_{\rm TV}
\le C_2(p,k,a)\Bigl(\bigl[\delta\bigl(\|\gamma\circ f_n^{-1} - \nu\|_{\rm KR}^{\frac 1 8}\bigr)\bigr]^{1-(4k-1)/p}  +
\|\gamma\circ f_n^{-1} - \nu\|_{\rm KR}^{\frac{1-(4k-1)/p}{8}}\Bigr).
$$
\end{corollary}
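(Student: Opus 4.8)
The plan is to transplant the argument of Corollary~\ref{c3.2} into the multidimensional setting, using Corollary~\ref{c4.1} in place of Corollary~\ref{c3.1} and controlling nondegeneracy through the uniform tail function $\delta$. First I would fix $\varepsilon\in(0,1)$ and apply the second inequality of Lemma~\ref{lem2.1} to the pair $\gamma\circ f_n^{-1}$, $\gamma\circ f_m^{-1}$, which gives
\begin{multline*}
\|\gamma\circ f_n^{-1}-\gamma\circ f_m^{-1}\|_{\rm TV}
\le 6\sqrt{k}\,\max\bigl\{\sigma(\gamma\circ f_n^{-1},\varepsilon),\,\sigma(\gamma\circ f_m^{-1},\varepsilon)\bigr\}\\
+\sqrt{k}\,\varepsilon^{-1}\|\gamma\circ f_n^{-1}-\gamma\circ f_m^{-1}\|_{\rm KR}.
\end{multline*}
The reason for working with the symmetric pair $(f_n,f_m)$ instead of $(f_n,\nu)$ is that both moduli $\sigma(\gamma\circ f_n^{-1},\varepsilon)$ and $\sigma(\gamma\circ f_m^{-1},\varepsilon)$ are then governed by the single nondegeneracy bound $\delta$, whereas $\sigma(\nu,\cdot)$ is not directly accessible, since the limit $\nu$ need not itself be an image measure $\gamma\circ g^{-1}$.

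Next I would estimate each modulus by Corollary~\ref{c4.1}, namely $\sigma(\gamma\circ f_n^{-1},\varepsilon)\le C(p,k,a)\,u_\gamma(\Delta_{f_n},\sqrt{\varepsilon})^{1-(4k-1)/p}$, and bound the integrand in the definition of $u_\gamma$ by its supremum, so that $u_\gamma(\Delta_{f_n},\sqrt{\varepsilon})\le\int_0^\infty(s+1)^{-2}\delta(s\sqrt{\varepsilon})\,ds$ uniformly in $n$. I would then let $m\to\infty$: the Kantorovich--Rubinstein term tends to $\|\gamma\circ f_n^{-1}-\nu\|_{\rm KR}$, while the left-hand side is lower semicontinuous under weak convergence, being a supremum of the weakly continuous functionals $\mu\mapsto\int\varphi\,d\mu$ over $\|\varphi\|_\infty\le1$; hence the inequality persists with $\nu$ in place of $\gamma\circ f_m^{-1}$. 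Reusing the elementary split at $s=\varepsilon^{-1/4}$ from Corollary~\ref{c3.2}, together with the monotonicity of $\delta$ and $\delta\le1$, yields $\int_0^\infty(s+1)^{-2}\delta(s\sqrt{\varepsilon})\,ds\le\delta(\varepsilon^{1/4})+\varepsilon^{1/4}$.

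Finally I would set $\varepsilon=\|\gamma\circ f_n^{-1}-\nu\|_{\rm KR}^{1/2}$ and use subadditivity $(x+y)^\theta\le x^\theta+y^\theta$ with $\theta=1-(4k-1)/p\in(0,1)$ (here the hypothesis $p>4k-1$ is exactly what guarantees $\theta\in(0,1)$); this produces the two claimed terms $\delta(\|\gamma\circ f_n^{-1}-\nu\|_{\rm KR}^{1/8})^{\theta}$ and $\|\gamma\circ f_n^{-1}-\nu\|_{\rm KR}^{\theta/8}$. The leftover Kantorovich contribution equals $\|\gamma\circ f_n^{-1}-\nu\|_{\rm KR}^{1/2}$, and since $\theta/8<1/2$ it is dominated by $\|\gamma\circ f_n^{-1}-\nu\|_{\rm KR}^{\theta/8}$ whenever the norm is at most one, so it folds into the second summand and is absorbed by the constant $C_2(p,k,a)$. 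The convergence $\|\gamma\circ f_n^{-1}-\nu\|_{\rm TV}\to0$ is then immediate from $\|\gamma\circ f_n^{-1}-\nu\|_{\rm KR}\to0$ and $\delta(\varepsilon)\to0$ as $\varepsilon\to0$. The step I expect to demand the most care is the passage $m\to\infty$: it is precisely the symmetric-pair formulation together with the lower semicontinuity of the total variation norm that allows the uniform nondegeneracy bound to be transferred to the limiting measure $\nu$, which cannot be reached by a direct estimate.
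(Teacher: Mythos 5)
Your proposal is correct and follows essentially the same route as the paper's proof: Lemma \ref{lem2.1} combined with Corollary \ref{c4.1} applied to the symmetric pair $(\gamma\circ f_n^{-1},\gamma\circ f_m^{-1})$, passage to the limit $m\to\infty$, the splitting of the integral $\int_0^\infty(s+1)^{-2}\delta(s\sqrt{\varepsilon})\,ds$ at $s\approx\varepsilon^{-1/4}$, and the choice $\varepsilon\approx\|\gamma\circ f_n^{-1}-\nu\|_{\rm KR}^{1/2}$. The only differences are cosmetic: the paper inserts normalizing factors $2^{1/8}$ and $2^{-1/2}$ so that $\varepsilon\le1$ holds automatically (your case $\|\gamma\circ f_n^{-1}-\nu\|_{\rm KR}>1$ is trivial anyway, since the total variation distance of two probability measures is at most $2$), and your lower-semicontinuity argument for the limit passage makes explicit exactly what the paper asserts without justification.
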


\begin{proof}
By Lemma \ref{lem2.1} and Corollary \ref{c4.1} we have
$$
\|\gamma\circ f_n^{-1} - \gamma\circ f_m^{-1}\|_{\rm TV}
\le 6C(p,k,a)\Bigl(\int_0^\infty(s+1)^{-2}\delta(s\sqrt\varepsilon)\, ds\Bigr)^{1-(4k-1)/p}  +
\sqrt{k}\varepsilon^{-1}\|\gamma\circ f_n - \gamma\circ f_m^{-1}\|_{\rm KR}.
$$
Passing to the limit as $m\to\infty$,
we obtain a similar estimate with $\nu$ in place of $\gamma\circ f_m^{-1}$.
Now we proceed as in Corollary \ref{c3.2}:
\begin{multline*}
\int_0^\infty(s+1)^{-2}\delta(s\sqrt\varepsilon)\, ds =
\int_0^{2^{1/8}\varepsilon^{-1/4}}(s+1)^{-2}\delta(s\sqrt\varepsilon)\, ds
+\int_{2^{1/8}\varepsilon^{-1/4}}^\infty(s+1)^{-2}\delta(s\sqrt\varepsilon)\, ds
\\
\le
\delta(2^{1/8}\varepsilon^{1/4}) + \frac{\varepsilon^{1/4}}{\varepsilon^{1/4}+2^{1/8}}
\le
\delta(2^{1/8}\varepsilon^{1/4}) + 2^{-1/8}\varepsilon^{1/4}.
\end{multline*}
Taking $\varepsilon = 2^{-1/2}\|\gamma\circ f_n^{-1} - \nu\|_{\rm KR}^{1/2}\le1$ we get
$$
\|\gamma\circ f_n^{-1} - \nu\|_{\rm TV}
\le C_2(p, k, a)\Bigl(\bigl[\delta\bigl(\|\gamma\circ f_n^{-1} - \nu\|_{\rm KR}^{1/8}\bigr)\bigr]^{1-(4k-1)/p}  +
\|\gamma\circ f_n^{-1} - \nu\|_{\rm KR}^{1/8-(4k-1)/8p}\Bigr).
$$
The corollary is proved.
\end{proof}

We now apply Theorem \ref{t4.1} to show the Nikolskii--Besov
smoothness of $\gamma\circ f^{-1}$ under our
weak nondegeneracy condition: $\Delta_f^{-1}$ is $\gamma$-integrable to some power $\theta\in(0,1)$.
The following corollary generalizes \cite[Theorem 4.1]{BKZ}.

\begin{corollary}\label{c4.4}
Let $k\in\mathbb{N}$, $a>0$, $b>0$, $\theta\in(0, 1)$, $p>4k-1$. Set $\alpha:=\frac{p\theta}{2p+(4k-1)\theta}$.
Then there exists a number  $C:=C(p, k, a, b, \theta)>0$
such that, for every mapping $f = (f_1, \ldots, f_k)\colon E\to\mathbb{R}^k$, where $f_i\in W^{p,2}(\gamma)$ and
$$
\|f\|_{W^{p,2}(\gamma)}:=\max_{i=1,\ldots, k}\bigl(\|f_i\|_{W^{p,2}(\gamma)}\bigr)\le a, \quad
\int \Delta_f^{-\theta} \, d\gamma\le b,
$$
one has
$$
\|(\gamma\circ f^{-1})_h-\gamma\circ f^{-1}\|_{\rm TV}\le C|h|^\alpha \quad \forall h\in\mathbb{R}^k.
$$
In other words, the density of $\gamma\circ f^{-1}$ belongs to the Nikolskii--Besov space~$B^\alpha(\mathbb{R}^k)$.
\end{corollary}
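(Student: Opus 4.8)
The plan is to follow verbatim the template of the one-dimensional Corollary~\ref{c3.4}, with $\Delta_f$ playing the role of $|\nabla f|_H$ and Theorem~\ref{t4.1} replacing Theorem~\ref{t3.1}. The only genuinely new input is to convert the integrability hypothesis $\int\Delta_f^{-\theta}\,d\gamma\le b$ into a power-decay estimate for the modulus $u_\gamma(\Delta_f,\varepsilon)$. First I would apply Chebyshev's inequality:
$$
\gamma(\Delta_f\le\varepsilon s)=\gamma\bigl(\Delta_f^{-\theta}\ge(\varepsilon s)^{-\theta}\bigr)\le(\varepsilon s)^\theta\int\Delta_f^{-\theta}\,d\gamma\le b(\varepsilon s)^\theta,
$$
whence
$$
u_\gamma(\Delta_f,\varepsilon)=\int_0^\infty(s+1)^{-2}\gamma(\Delta_f\le\varepsilon s)\,ds\le b\varepsilon^\theta\int_0^\infty s^\theta(s+1)^{-2}\,ds=c(b,\theta)\varepsilon^\theta.
$$
The auxiliary integral converges precisely because $\theta\in(0,1)$: the integrand is $O(s^\theta)$ near the origin and $O(s^{\theta-2})$ at infinity, and $\theta-2<-1$.

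Next I would feed this estimate into Theorem~\ref{t4.1}, which is available since $p>4k-1$ forces $1-(4k-1)/p>0$. For every $t>0$ and every $\varepsilon\in(0,1)$ one gets
$$
\sigma(\gamma\circ f^{-1},t)\le C_1\bigl(c(b,\theta)\bigr)^{1-(4k-1)/p}\,t\,\varepsilon^{-2+\theta(1-(4k-1)/p)}+c(b,\theta)\varepsilon^\theta.
$$
The two terms are balanced by the choice $\varepsilon=t^{\frac{p}{2p+(4k-1)\theta}}$; indeed, the exponent of $\varepsilon$ in the first term is $-2+\theta-\theta(4k-1)/p$, and equating the two powers of $\varepsilon$ gives $t=\varepsilon^{2+\theta(4k-1)/p}$, i.e. exactly this value of $\varepsilon$. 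With it, both terms reduce to a constant multiple of $\varepsilon^\theta=t^{\frac{p\theta}{2p+(4k-1)\theta}}=t^\alpha$, reproducing precisely the exponent $\alpha$ in the statement. Thus $\sigma(\gamma\circ f^{-1},t)\le Ct^\alpha$ for all small $t$, with $C$ depending only on $p,k,a,b,\theta$.

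Finally I would invoke the first inequality of Theorem~\ref{t2.1}, $\|\mu_h-\mu\|_{\rm TV}\le2\sigma(\mu,|h|/2)$ with $\mu=\gamma\circ f^{-1}$, to turn the bound on $\sigma$ into $\|(\gamma\circ f^{-1})_h-\gamma\circ f^{-1}\|_{\rm TV}\le C|h|^\alpha$ for small $|h|$; for $|h|$ bounded away from zero the estimate is immediate after enlarging $C$, since the total variation distance between a probability measure and its shift never exceeds $2$ while $|h|^\alpha$ is then bounded below. Since this is a direct parallel of Corollary~\ref{c3.4}, I anticipate no substantive obstacle: the sole point requiring care is the bookkeeping of the optimal exponent $\varepsilon=t^{p/(2p+(4k-1)\theta)}$, which must reproduce the stated $\alpha$ exactly, and the verification that the chosen $\varepsilon$ lies in $(0,1)$ for the relevant range of small $t$.
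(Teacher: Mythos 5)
Your proposal is correct and follows essentially the same route as the paper's own proof: the Chebyshev bound $u_\gamma(\Delta_f,\varepsilon)\le c(b,\theta)\varepsilon^\theta$, substitution into Theorem~\ref{t4.1}, the balancing choice $\varepsilon=t^{p/(2p+(4k-1)\theta)}$, and conversion to the total variation bound via Theorem~\ref{t2.1}. The only cosmetic difference is that the paper dispatches the case of large arguments at the level of $\sigma(\gamma\circ f^{-1},t)\le 1$ for $t\ge1$, while you handle large $|h|$ directly at the total variation level; both are trivial and equivalent.
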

\begin{proof}
Let us estimate $u_\gamma(\Delta_f, \varepsilon)$:
$$
u_\gamma(\Delta_f, \varepsilon):=\int_0^\infty (s+1)^{-2}\gamma\bigl(\Delta_f \le \varepsilon s\bigr)\, ds
\le
\varepsilon^\theta b\int_0^\infty s^\theta(s+1)^{-2}\, ds = c_1(b, \theta)\varepsilon^\theta.
$$
By Theorem \ref{t4.1}
for $\varepsilon\in(0,1)$ one has
\begin{multline*}
\sigma(\gamma\circ f^{-1}, t)
\le
C_1(p,k,a)t\varepsilon^{-2}u_\gamma(\Delta_f, \varepsilon)^{1-(4k-1)/p}
+
u_\gamma(\Delta_f, \varepsilon)
\\
\le
C_2(p, k, a,b, \theta)(t\varepsilon^{-2+(1-(4k-1)/p)\theta} + \varepsilon^\theta).
\end{multline*}
Taking $\varepsilon = t^{\frac{p}{2p +(4k-1)\theta}}$ for $t<1$ and noting that
$\sigma(\gamma\circ f^{-1}, t)\le 1\le t$ for $t\ge1$,
by Theorem \ref{t2.1}
we get the desired bound.
\end{proof}

The next corollary is a generalization of \cite[Theorem 4.2]{BKZ} to the case of Sobolev mappings
in place of polynomials.

\begin{corollary}\label{c4.5}
Let $k\in\mathbb{N}$, $a>0$, $b>0$, $\theta\in(0, 1)$, $p>4k-1$. Set $\alpha:=\frac{p\theta}{2p+(4k-1)\theta}$.
Then there exists a number $C:=C(p, k, a, b, \theta)>0$
such that for every pair of mappings $f = (f_1, \ldots, f_k), g = (g_1, \ldots, g_k)\colon\,
E\to\mathbb{R}^k$, where $f_i, g_i\in W^{p,2}(\gamma)$ and
$$
\|f\|_{W^{p,2}(\gamma)}\le a, \quad \|g\|_{W^{p,2}(\gamma)}\le a,
\quad \int \Delta_f^{-\theta} \, d\gamma\le b, \quad \int \Delta_g^{-\theta} \, d\gamma\le b,
$$
one has
$$
\|\gamma\circ f^{-1} - \gamma\circ g^{-1}\|_{\rm TV}\le
C(p, k, a, b, \theta)\|\gamma\circ f^{-1} - \gamma\circ g^{-1}\|_{\rm KR}^{\frac{\alpha}{1+\alpha}}.
$$
\end{corollary}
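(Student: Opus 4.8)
The plan is to mirror the proof of Corollary \ref{c3.5}, combining the two-sided comparison of distances from Lemma \ref{lem2.1} with the power bound on $\sigma$ obtained inside the proof of Corollary \ref{c4.4}. First I would record that the argument establishing Corollary \ref{c4.4} in fact yields, under the stated hypotheses on $f$ (and identically on $g$), the intermediate estimate
$$
\sigma(\gamma\circ f^{-1}, \varepsilon)\le C'\varepsilon^\alpha, \qquad \sigma(\gamma\circ g^{-1}, \varepsilon)\le C'\varepsilon^\alpha, \qquad \varepsilon\in(0,1),
$$
with $\alpha = \frac{p\theta}{2p+(4k-1)\theta}$ and a constant $C' = C'(p,k,a,b,\theta)$; this is exactly the bound one reads off just before the final application of Theorem \ref{t2.1} in that proof, with the auxiliary parameter there set equal to $t^{p/(2p+(4k-1)\theta)}$.

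Second, for any $\varepsilon\in(0,1)$ I would apply the second inequality of Lemma \ref{lem2.1} to the pair $\mu = \gamma\circ f^{-1}$, $\nu = \gamma\circ g^{-1}$ and insert the two bounds on $\sigma$ above, obtaining
$$
\|\gamma\circ f^{-1} - \gamma\circ g^{-1}\|_{\rm TV}\le 6\sqrt{k}\, C'\varepsilon^\alpha + \sqrt{k}\,\varepsilon^{-1}\|\gamma\circ f^{-1} - \gamma\circ g^{-1}\|_{\rm KR}.
$$
Third, I would balance the two terms by choosing $\varepsilon = \|\gamma\circ f^{-1} - \gamma\circ g^{-1}\|_{\rm KR}^{1/(1+\alpha)}$, which makes both summands proportional to the power $\|\gamma\circ f^{-1} - \gamma\circ g^{-1}\|_{\rm KR}^{\alpha/(1+\alpha)}$ and so delivers the asserted estimate.

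The only points requiring care are bookkeeping ones: this choice of $\varepsilon$ is admissible, i.e.\ lies in $(0,1)$, precisely when the Kantorovich--Rubinstein distance is smaller than $1$, so I would treat the complementary regime $\|\gamma\circ f^{-1} - \gamma\circ g^{-1}\|_{\rm KR}\ge1$ separately, where the claim is immediate since for probability measures $\|\gamma\circ f^{-1} - \gamma\circ g^{-1}\|_{\rm TV}\le 2\le 2\|\gamma\circ f^{-1} - \gamma\circ g^{-1}\|_{\rm KR}^{\alpha/(1+\alpha)}$, which can be absorbed into the constant $C$. I do not anticipate any genuine obstacle here: all the analytic content, namely the nondegeneracy-driven decay of $\sigma$, has already been secured in Corollary \ref{c4.4}, so that what remains is exactly the elementary interpolation between the total variation and Kantorovich--Rubinstein norms carried out in Corollary \ref{c3.5}, now with the $\sqrt{k}$ factors from the multidimensional form of Lemma \ref{lem2.1} folded into the final constant.
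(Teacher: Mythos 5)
Your proposal is correct and follows essentially the same route as the paper: extract the bound $\sigma(\gamma\circ f^{-1},\varepsilon)\le C'\varepsilon^{\alpha}$ from the proof of Corollary \ref{c4.4}, plug it into Lemma \ref{lem2.1}, and balance by choosing $\varepsilon$ comparable to $\|\gamma\circ f^{-1}-\gamma\circ g^{-1}\|_{\rm KR}^{1/(1+\alpha)}$. The only cosmetic difference is how admissibility of $\varepsilon$ is ensured: the paper takes $\varepsilon=2^{-1}\|\gamma\circ f^{-1}-\gamma\circ g^{-1}\|_{\rm KR}^{1/(1+\alpha)}$ (which lies in $(0,1)$ since the KR distance of two probability measures is at most $2$), whereas you split off the regime $\|\gamma\circ f^{-1}-\gamma\circ g^{-1}\|_{\rm KR}\ge 1$; both devices are fine.
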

\begin{proof}
By Lemma \ref{lem2.1}, for an arbitrary $\varepsilon\in(0,1)$, one has
\begin{multline*}
\|\gamma\circ f^{-1} - \gamma\circ g^{-1}\|_{\rm TV}
\le 6\sqrt{k}\max\{\sigma(\gamma\circ f^{-1}, \varepsilon),\sigma(\gamma\circ g^{-1}, \varepsilon)\}  +
\sqrt{k}\varepsilon^{-1}\|\gamma\circ f^{-1} - \gamma\circ g^{-1}\|_{\rm KR}
\\
\le
C_1(p,k, a,b, \theta) (\varepsilon^\alpha + \varepsilon^{-1}\|\gamma\circ f^{-1} - \gamma\circ g^{-1}\|_{\rm KR}).
\end{multline*}
Taking $\varepsilon = 2^{-1}\|\gamma\circ f^{-1} - \gamma\circ g^{-1}\|_{\rm KR}^{\frac{1}{1+\alpha}}$
we get the desired bound.
\end{proof}

\vskip .1in

The author is a Young
Russian Mathematics award winner and would like to thank its sponsors and jury.

This research was supported by the Russian Science Foundation Grant 17-11-01058
at Lo\-mo\-no\-sov
Moscow State University.

\end{document}